\numberwithin{equation}{section}
\let\OLDthebibliography\thebibliography
\renewcommand\thebibliography[1]{
  \OLDthebibliography{#1}
  \setlength{\parskip}{0pt}
  \setlength{\itemsep}{2pt plus 0.5ex}
}
\def\@cite#1#2{{\m@th\upshape\bfseries%
[{#1\if@tempswa{\m@th\upshape\mdseries, #2}\fi}]}}
\theoremstyle{plain}
\newtheorem{theorem}{Theorem}[section]
\newtheorem{corollary}[theorem]{Corollary}
\newtheorem{proposition}[theorem]{Proposition}
\newtheorem{lemma}[theorem]{Lemma}
\theoremstyle{definition}
\newtheorem{definition}[theorem]{Definition}
\newtheorem{example}[theorem]{Example}
\newtheorem{remark}[theorem]{Remark}
\newtheorem{question}[theorem]{Question}
\theoremstyle{remark}
\newcommand{\bbC}{{\mathbb{C}}}
\newcommand{\bbD}{{\mathbb{D}}}
\newcommand{\bbT}{{\mathbb{T}}}
\newcommand{\bbZ}{{\mathbb{Z}}}
\newcommand{\A}{{\mathcal{A}}}
\newcommand{\B}{{\mathcal{B}}}
\newcommand{\C}{{\mathcal{C}}}
\newcommand{\F}{{\mathcal{F}}}
\renewcommand{\H}{{\mathcal{H}}}
\newcommand{\K}{{\mathcal{K}}}
\renewcommand{\L}{{\mathcal{L}}}
\renewcommand{\O}{{\mathcal{O}}}
\renewcommand{\S}{{\mathcal{S}}}
\newcommand{\T}{{\mathcal{T}}}
\renewcommand{\phi}{\varphi}
\newcommand{\upchi}{{\raise.35ex\hbox{\ensuremath{\chi}}}}
\newcommand{\alg}{\operatorname{alg}}
\newcommand{\Aut}{\operatorname{Aut}}
\newcommand{\id}{{\operatorname{id}}}
\newcommand{\spn}{\operatorname{span}}
\newcommand{\ca}{\mathrm{C}^*}
\newcommand{\cmax}{\mathrm{C}^*_{\text{max}}}
\newcommand\0{\vec{0}}
\newcommand{\cstarlattice}{\text{C$^*$-Lat}}
\newcommand{\sDmax}{C^*_{\operatorname{sD-max}}}
\begin{document}
\title[Semi-Dirichlet operator algebras]{Crossed products and C*-covers of semi-Dirichlet operator algebras}

\author{Adam~Humeniuk}
\address{Department of Mathematics and Computing \\ Mount Royal University \\  Calgary, AB \\ Canada}
\email{ahumeniuk@mtroyal.ca}

\author[E.G. Katsoulis]{Elias~G.~Katsoulis}
\address {Department of Mathematics 
\\East Carolina University\\ Greenville, NC 27858\\USA}
\email{katsoulise@ecu.edu}

\author{Christopher~Ramsey}
\address {Department of Mathematics and Statistics
\\MacEwan University \\ Edmonton, AB \\Canada}
\email{ramseyc5@macewan.ca}

\begin{abstract}
In this paper, we show that the semi-Dirichlet C*-covers of a semi-Dirichlet operator algebra form a complete lattice, establishing that there is a maximal semi-Dirichlet C*-cover. Given an operator algebra dynamical system we prove a dilation theory that shows that the full crossed product is isomorphic to the relative full crossed product with respect to this maximal semi-Dirichlet cover. In this way, we can show that every semi-Dirichlet dynamical system has a semi-Dirichlet full crossed product.
\end{abstract}

\thanks{2020 {\it  Mathematics Subject Classification.}
47L55, 
46L05,  
46L07, 47L40, 47L65}
\thanks{{\it Key words and phrases:} Dirichlet, semi-Dirichlet, C*-cover, operator algebra, non-selfadjoint,}

\maketitle

\section{Introduction}

Dirichlet algebras were first introduced by Gleason in 1957 \cite{Gleason}. The name was given because ``[a] Dirichlet algebra is a function algebra for which the boundary fits smoothly into the [character space], in such a way that we can solve the analogue of the Dirichlet problem in harmonic functions" \cite[Section 3]{Gleason}. Interestingly, this is the same paper where he introduces the concept of parts (later called Gleason parts).
Arveson \cite{Arveson} generalized this notion of the closed self-adjoint operator space generated by an algebra being equal to a self-adjoint algebra to the non-commutative operator setting in his definition of subdiagonal algebras though this was in the context of von Neumann algebras. This was later generalized to say that an operator algebra $\A$ is Dirichlet if and only if $\overline{\A + \A^*}$ is a C$^*$-algebra in the appropriate context (everything will be defined precisely in the next section).

Davidson and the second author defined semi-Dirichlet algebras in \cite{DKDoc} to denote those operator algebras $\A$ where the self-adjoint operator space generated by $\A$ is well-behaved with respect to one of the two orders of multiplication, that is,
$\A^*\A \subseteq \overline{\A + \A^*}$, but perhaps not the other order. In particular, Dirichlet algebras are semi-Dirichlet. One of the main points of \cite{DKDoc} was that semi-Dirichlet operator algebras enjoy a rather well-behaved representation theory. This theme will be continued throughout this paper.

One particularly nice class of semi-Dirichlet algebras is the tensor algebras of C$^*$-correspondences. At the time of writing \cite{DKDoc} it was unknown whether this described all semi-Dirichlet algebras. The first Dirichlet algebra that is not completely isometrically isomorphic to a tensor algebra was described by Kakariadis \cite{Kakariadis}. The pursuit of finding non-tensor semi-Dirichlet examples was the initial driving force behind the joint work of the second and third authors \cite{KatRamMem, KatRamLimit}. All of this is carefully described in detail in Section 2 along with all the necessary background, definitions and theory pertaining to Dirichlet and semi-Dirichlet algebras and C$^*$-correspondences.
As well, possibly the simplest example of a non-tensor Dirichlet algebra is given (Proposition \ref{prop:HartzDirichlet}), and this operator algebra first appeared in the work of Hartz \cite{Hartz}:
\[
\left[\begin{matrix} A(\bbD) & 0 \\ C(\bbT) & A(\bbD)^* \end{matrix}\right],
\]
where $A(\bbD)$ is
the disc algebra and $C(\bbT)$ is the algebra of continuous functions
on the unit circle $\bbT$.

A standard method for understanding the structure of an operator algebra has been to consider the C$^*$-algebras that are generated by its completely isometric representations, namely its C$^*$-covers. This began with Arveson's conjecture in 1969 that there is always a smallest C$^*$-cover, called the C$^*$-envelope, which he was able to prove in many examples \cite{ArvEnvelope}. This conjecture was proven to be true by Hamana \cite{Hamana}. The complementary maximal C$^*$-cover was first investigated in depth by Blecher in 1999 \cite{Blech}. Under direct sums and boundary ideal quotients the collection of C$^*$-covers of an operator algebra, modulo C$^*$-cover isomorphism, was shown to be a complete lattice by Hamidi and Thompson \cite{Hamidi, Thompson}. The first and third authors defined equivalences of two operator algebras based on their complete lattices of C$^*$-covers to study how much information about the operator algebra is retained in its C$^*$-covers \cite{HumRam}. In particular, it was found that the complete lattice of C$^*$-covers cannot distinguish operator algebras up to completely isometric isomorphism. 

In Section 3, we study the C$^*$-covers of semi-Dirichlet operator algebras. A C$^*$-cover is called semi-Dirichlet if the operator algebra enjoys the semi-Dirichlet property in that cover. These covers turn out to be very nicely behaved as they arise from Shilov representations, which are compressions to an invariant subspace of a $*$-homomorphism of the C$^*$-envelope, Proposition \ref{prop:shilov}. Theorem \ref{thm:sDLattice} establishes that the subset of semi-Dirichlet C$^*$-covers forms a complete sublattice, implying that there is a maximal semi-Dirichlet C$^*$-cover. Most significantly, this maximal semi-Dirichlet C$^*$-cover corresponds to the Cuntz-Pimsner-Toeplitz algebra in the case of a tensor algebra of a C$^*$-correspondence, Theorem \ref{thm:sDmaxisToeplitz}.

With all of this theory, Section 4 studies the crossed products of semi-Dirichlet operator algebras. Crossed products of operator algebra dynamical systems were defined and studied by the second and third authors in \cite{KatRamMem}. Their study is deeply connected to the C$^*$-covers of the operator algebra and so the more information we have about these covers the more tractable the crossed products will be. Theorem \ref{thm:dilatestosD} shows that every covariant representation of a semi-Dirichlet operator algebra dynamical system dilates to a semi-Dirichlet covariant representation. This allows us to prove that the full crossed product is the same as the relative crossed product with the maximal semi-Dirichlet cover, Theorem \ref{thm:sDcrossedproduct}, with the consequence that the full crossed product is semi-Dirichlet. Lastly, Theorem \ref{thm:AbeliansDcrossed} shows that in the case when the group is abelian then the dynamical system is semi-Dirichlet if and only if the full crossed product is semi-Dirichlet, using Takai duality.

\section{Semi-Dirichlet operator algebras}\label{sec:semi-dirichlet}

As promised, we start with the careful definitions of Dirichlet and semi-Dirichlet. 

\begin{definition} Let $\A$ be an operator algebra with $\epsilon : \A \rightarrow C^*_e(\A)$ the completely isometric embedding of $\A$ into its C$^*$-envelope.
\begin{itemize}
    \item $\A$ is called {\bfseries semi-Dirichlet} if 
$\epsilon(\A)^*\epsilon(\A) \subseteq \overline{\epsilon(\A) + \epsilon(\A)^*}$.
\vskip 4 pt
\item $\A$ is called {\bfseries Dirichlet} if $\overline{\epsilon(\A) + \epsilon(\A)^*} = C^*_e(\A)$.
\end{itemize} 
\end{definition}

For an operator algebra $\A$ all completely isometric representations $\rho: \A \rightarrow B(H)$ yield completely isometrically isomorphic copies of the operator algebra $\rho(\A)^*$. So it is entirely justified to refer to $\A^*$ abstractly. In particular, every such $\rho$ uniquely determines a completely isometric representation $\tilde\rho : \A^* \rightarrow B(H)$ by $\tilde\rho(b) = \rho(b^*)^*$ for all $b\in \A^*$.

There is a complementary property to semi-Dirichlet using the other order, namely
\[
\epsilon(\A)\epsilon(\A)^* \subseteq \overline{\epsilon(\A) + \epsilon(\A)^*}.
\]
One could be tempted to call this the ``co-semi-Dirchlet property" (in the manner of isometry and co-isometry) or perhaps the ``semi-Dirichlet-$\ast$ property". To avoid these somewhat cumbersome names this is usually referred to as $\A^*$ is semi-Dirichlet since the above statement becomes
\[
\tilde\epsilon(\A^*)^*\tilde\epsilon(\A^*) \subseteq \overline{\tilde\epsilon(\A^*)^* + \tilde\epsilon(\A^*)}.
\]

\begin{lemma}\label{lem:sDisoinvariant}
Suppose $\A$ and $\B$ are completely isometrically isomorphic operator algebras. Then $\A$ is semi-Dirichlet if and only if $\B$ is semi-Dirichlet.
\end{lemma}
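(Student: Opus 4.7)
The plan is to leverage the universal (co-)rigidity of the C*-envelope to transport the semi-Dirichlet property along a completely isometric isomorphism. Let $\phi : \A \to \B$ be a completely isometric isomorphism, and write $\epsilon_\A : \A \to C^*_e(\A)$ and $\epsilon_\B : \B \to C^*_e(\B)$ for the canonical embeddings. The first step is to promote $\phi$ to a $\ast$-isomorphism $\Phi : C^*_e(\A) \to C^*_e(\B)$ such that $\Phi \circ \epsilon_\A = \epsilon_\B \circ \phi$.

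To produce $\Phi$, observe that $\epsilon_\B \circ \phi : \A \to C^*_e(\B)$ is a completely isometric representation of $\A$ whose image generates $C^*_e(\B)$ as a C*-algebra, so it is a C*-cover of $\A$. By the universal property of the C*-envelope, there is a (necessarily surjective) $\ast$-homomorphism $\Phi : C^*_e(\B) \to C^*_e(\A)$ -- I am writing this in the direction forced by the universal property, that the envelope is a quotient of every cover -- sending $\epsilon_\B(\phi(a)) \mapsto \epsilon_\A(a)$. Applying the same argument to $\phi^{-1} : \B \to \A$ yields a $\ast$-epimorphism in the opposite direction, and on the generating sets $\epsilon_\A(\A)$ and $\epsilon_\B(\B)$ the two compositions are the identity. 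Hence $\Phi$ is a $\ast$-isomorphism intertwining $\epsilon_\A$ and $\epsilon_\B$ as claimed.

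With $\Phi$ in hand, the rest is a direct transport. Assume $\A$ is semi-Dirichlet, so $\epsilon_\A(\A)^*\epsilon_\A(\A) \subseteq \overline{\epsilon_\A(\A) + \epsilon_\A(\A)^*}$. Since $\Phi$ is a $\ast$-isomorphism, it is isometric, preserves adjoints, and commutes with norm closure; applying it to both sides and using $\Phi(\epsilon_\A(\A)) = \epsilon_\B(\phi(\A)) = \epsilon_\B(\B)$ gives
\[
\epsilon_\B(\B)^*\epsilon_\B(\B) \subseteq \overline{\epsilon_\B(\B) + \epsilon_\B(\B)^*},
\]
so $\B$ is semi-Dirichlet. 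Swapping the roles of $\A$ and $\B$ (using $\phi^{-1}$) yields the converse.

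The only nontrivial step is the construction of $\Phi$, and even that is standard: it is nothing more than the fact that the C*-envelope is the minimal C*-cover, so every completely isometric isomorphism of operator algebras lifts canonically to a $\ast$-isomorphism of their envelopes. Once $\Phi$ exists, the semi-Dirichlet condition is patently preserved because it is stated entirely in terms of products, adjoints, sums, and closures -- all of which are respected by any $\ast$-isomorphism.
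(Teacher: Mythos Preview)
Your proof is correct and follows essentially the same approach as the paper: lift the completely isometric isomorphism to a $\ast$-isomorphism of C*-envelopes, then transport the semi-Dirichlet inclusion through it. The only blemish is a harmless directional slip---you first define $\Phi : C^*_e(\B) \to C^*_e(\A)$ via the universal property but then write $\Phi(\epsilon_\A(\A)) = \epsilon_\B(\B)$ as if it went the other way; since $\Phi$ is shown to be a $\ast$-isomorphism this is immaterial, but you should pick one direction and stick with it.
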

\begin{proof}
Suppose $\pi : \A \rightarrow \B$ is a completely isometric isomorphism and $\A$ is semi-Dirichlet. This induces a $*$-isomorphism $\tilde\pi: C^*_e(\A) \rightarrow C^*_e(\B)$ where
\[
\tilde\pi(\epsilon_\A(a)) = \epsilon_\B(\pi(a))
\]
for every $a\in \A$. Thus,
\begin{align*}
\epsilon_\B(\B)^*\epsilon_\B(\B) \ & = \ \tilde\pi\left(\epsilon_\A(\A)^*\epsilon_\A(\A)\right)
\\ & \subseteq \ \tilde\pi\left( \overline{\epsilon_\A(\A) + \epsilon_\A(\A)^*}\right)
\\ & = \overline{\epsilon_\B(\B) + \epsilon_\B(\B)^*}.\qedhere
\end{align*}
\end{proof}

\begin{definition}
A completely contractive representation $\varphi : \A \rightarrow B(H)$ is {\bfseries semi-Dirichlet} if 
\[
\varphi(\A)^*\varphi(\A) \subseteq \overline{\varphi(\A) + \varphi(\A)^*}.
\]
\end{definition}

\begin{proposition}[Proposition 4.2.i \& ii \cite{DKDoc}]\label{prop:DKDirichlet}
Let $\A$ be an operator algebra.
\begin{enumerate}
    \item $\A$ is Dirichlet if and only if $\A$ and $\A^*$ are semi-Dirichlet.
    \item $\A$ is semi-Dirichlet if and only if $\A$ has a completely isometric semi-Dirichlet representation.
\end{enumerate} 
\end{proposition}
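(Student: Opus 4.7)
The plan is to handle the two parts separately, with the easy implications dispatched in each before addressing the substantive directions.

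For (i), the forward direction is immediate from the definitions: if $\overline{\epsilon(\A) + \epsilon(\A)^*} = C^*_e(\A)$, then this closed subspace contains every product, so in particular both $\epsilon(\A)^*\epsilon(\A)$ and $\epsilon(\A)\epsilon(\A)^*$ lie inside it. For the converse, I would introduce $\S \coloneqq \overline{\epsilon(\A) + \epsilon(\A)^*} \subseteq C^*_e(\A)$, which is a closed self-adjoint subspace containing $\epsilon(\A)$, and argue that the two semi-Dirichlet hypotheses force $\S$ to be closed under multiplication. Expanding a typical product
\[
(a_1 + b_1^*)(a_2 + b_2^*) = a_1 a_2 + a_1 b_2^* + b_1^* a_2 + b_1^* b_2^*
\]
with $a_i, b_i \in \epsilon(\A)$, the outer terms $a_1 a_2$ and $b_1^* b_2^* = (b_2 b_1)^*$ lie in $\S$ by construction; the semi-Dirichlet property of $\A$ places $b_1^* a_2 \in \S$, and the (co)semi-Dirichlet property of $\A^*$ places $a_1 b_2^* \in \S$. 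By bilinearity and continuity $\S$ is thus a C*-subalgebra of $C^*_e(\A)$ containing $\epsilon(\A)$; since $C^*_e(\A)$ is C*-generated by $\epsilon(\A)$, we must have $\S = C^*_e(\A)$, i.e., $\A$ is Dirichlet.

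For (ii), the forward direction is again immediate: the canonical embedding $\epsilon : \A \to C^*_e(\A)$ is a completely isometric representation, and by hypothesis it has the semi-Dirichlet property. For the converse, let $\varphi : \A \to B(H)$ be any completely isometric semi-Dirichlet representation. Then $(C^*(\varphi(\A)), \varphi)$ is a C*-cover of $\A$, so by the universal property of the C*-envelope there is a surjective $*$-homomorphism $q : C^*(\varphi(\A)) \to C^*_e(\A)$ with $q \circ \varphi = \epsilon$. Applying $q$ to the hypothesis $\varphi(\A)^*\varphi(\A) \subseteq \overline{\varphi(\A) + \varphi(\A)^*}$ and using continuity together with $q$ being a $*$-map yields
\[
\epsilon(\A)^*\epsilon(\A) \ = \ q\bigl(\varphi(\A)^*\varphi(\A)\bigr) \ \subseteq \ q\bigl(\overline{\varphi(\A) + \varphi(\A)^*}\bigr) \ \subseteq \ \overline{\epsilon(\A) + \epsilon(\A)^*},
\]
so $\A$ is semi-Dirichlet.

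There is no genuine obstacle here; both nontrivial directions rest on standard machinery. The only step that needs care is the multiplicative-closure computation in (i), where one must verify that all four summands in the expansion of $(a_1 + b_1^*)(a_2 + b_2^*)$ stay inside $\overline{\epsilon(\A) + \epsilon(\A)^*}$ — and this is exactly what the two semi-Dirichlet hypotheses together supply. In (ii), the entire content is the universal property of the C*-envelope, which produces the quotient $q$ transporting the semi-Dirichlet containment from $C^*(\varphi(\A))$ down to $C^*_e(\A)$.
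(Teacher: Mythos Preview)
Your proof is correct. Note, however, that the paper does not supply its own proof of this proposition: it is stated as a citation of \cite[Proposition 4.2.i \& ii]{DKDoc} and left unproven in the text, so there is nothing to compare against here beyond verifying correctness. Your argument is the standard one --- the multiplicative-closure computation for (i) and the C*-envelope quotient for (ii) --- and it goes through without issue.
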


\begin{proposition}[Lemma 5.2 \cite{KatRamMem}]\label{prop:KRDirichlet}
Let $\A$ be an operator algebra. If $\rho : \A \rightarrow B(H)$ is a completely isometric representation with $\overline{\rho(\A) + \rho(\A)^*}$ a C$^*$-algebra, then $\A$ is Dirichlet and there is a completely isometric isomorphism $\Phi: C^*(\rho(\A)) \rightarrow C^*_e(\A)$ such that $\Phi\rho = \epsilon$.
\end{proposition}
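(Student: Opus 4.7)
The plan is to first upgrade the hypothesis to full Dirichletness of $\A$ via Proposition~\ref{prop:DKDirichlet}, and then to identify $C^*(\rho(\A))$ with $C^*_e(\A)$ by building a $*$-inverse to the canonical quotient map using Arveson's extension theorem.

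Since $\overline{\rho(\A)+\rho(\A)^*}$ is a C$^*$-algebra it is closed under multiplication, so both $\rho(\A)^*\rho(\A)$ and $\rho(\A)\rho(\A)^*$ lie inside it. The first inclusion says $\rho$ is a completely isometric semi-Dirichlet representation of $\A$, so Proposition~\ref{prop:DKDirichlet}(ii) makes $\A$ semi-Dirichlet. Re-reading the second inclusion through the associated completely isometric representation $\tilde\rho : \A^* \to B(H)$, $\tilde\rho(b) = \rho(b^*)^*$, gives a completely isometric semi-Dirichlet representation of $\A^*$, so $\A^*$ is semi-Dirichlet as well. Proposition~\ref{prop:DKDirichlet}(i) then yields that $\A$ is Dirichlet, and consequently $C^*_e(\A) = \overline{\epsilon(\A)+\epsilon(\A)^*}$.

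Next, since $\rho$ is completely isometric and $\rho(\A)$ generates $C^*(\rho(\A)) = \overline{\rho(\A)+\rho(\A)^*}$, the pair $(C^*(\rho(\A)),\rho)$ is a C$^*$-cover of $\A$, so the universal property of the C$^*$-envelope produces a unique surjective $*$-homomorphism $\Phi : C^*(\rho(\A)) \to C^*_e(\A)$ with $\Phi\circ\rho = \epsilon$. The main obstacle is to show $\Phi$ is injective; the universal property alone guarantees only surjectivity, and the kernel (a Shilov boundary ideal) could a priori be nontrivial. My plan is to exhibit an explicit inverse: apply Arveson's extension theorem to the unital completely contractive map $\epsilon(a) \mapsto \rho(a)$ defined on $\epsilon(\A) \subseteq C^*_e(\A)$ to produce a UCP map $\Psi : C^*_e(\A) \to B(H)$. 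Then $\Psi\circ\Phi$ is a UCP map on $C^*(\rho(\A)) \subseteq B(H)$ that fixes $\rho(\A)$ and, by taking adjoints, also fixes $\rho(\A)+\rho(\A)^*$; density of this space in $C^*(\rho(\A))$ forces $\Psi\circ\Phi = \mathrm{id}_{C^*(\rho(\A))}$. The symmetric computation, using density of $\epsilon(\A)+\epsilon(\A)^*$ in $C^*_e(\A)$ from the Dirichlet property just established, gives $\Phi\circ\Psi = \mathrm{id}_{C^*_e(\A)}$. Hence $\Phi$ is a $*$-isomorphism, which is automatically completely isometric.
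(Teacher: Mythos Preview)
The paper does not prove this proposition; it is quoted without proof from \cite[Lemma~5.2]{KatRamMem}, so there is no in-paper argument to compare against. Your proof is essentially correct and follows a standard line: deduce Dirichletness from Proposition~\ref{prop:DKDirichlet}, obtain $\Phi$ from the universal property of the C$^*$-envelope, and show injectivity by building a completely positive left inverse $\Psi$.

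Two small points deserve attention. First, you tacitly assume $\A$ is unital when you call $\epsilon(a)\mapsto\rho(a)$ a \emph{unital} completely contractive map; the proposition makes no such assumption. The fix is routine (pass to unitizations, or use approximate identities and the Meyer--type extension results), but it should be mentioned. Second, what you invoke is not quite Arveson's extension theorem: that theorem extends completely positive maps from an operator \emph{system} to an ambient C$^*$-algebra, whereas here you start from an operator algebra $\epsilon(\A)$. The step you actually need is the standard fact that a unital completely contractive map on a unital operator space $V$ has a well-defined self-adjoint extension to $V+V^*$ which is completely positive (the Paulsen off-diagonal trick). Once you have $\Psi$ on $\overline{\epsilon(\A)+\epsilon(\A)^*}=C^*_e(\A)$ this way, Arveson is not needed at all, since the Dirichlet property already makes the operator system the whole C$^*$-envelope. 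With these clarifications the argument is complete.
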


This result is particularly useful in identifying the C*-envelope, for example consider $C^*_e(A(\bbD)) = \overline{A(\bbD) + A(\bbD)^*} = C(\bbT)$.

As mentioned, Davidson and the second author introduced the definition of semi-Dirichlet in \cite{DKDoc} because it encapsulated the nice dilation properties of tensor algebras of C$^*$-correspondences. Much effort has been expended in showing that the class of semi-Dirichlet algebras is far broader than tensor algebras. To begin we need to remind the reader of some definitions.

A {\em C$^*$-correspondence} is a triple $(X, \C, \varphi_X)$ (but often just denoted $(X,\C)$) consisting of a C$^*$-algebra $\C$, a right Hilbert $\C$-module $(X, \langle\cdot,\cdot\rangle)$, and a non-degenerate $*$-homomorphism $\varphi_X : \C \rightarrow \L(X)$ into the C$^*$-algebra of adjointable operators on $X$. The point being that this is a generalization of a Hilbert space with
\begin{itemize}
\item $\langle x,y\rangle \in \C$
\item $\langle x,y\rangle = \langle y,x\rangle^*$
\item $\langle x,x\rangle \geq 0$ with equality if and only if $x=0$
\item $\langle x,yc\rangle = \langle x,y\rangle c$
\item $\langle x,\varphi_X(c)y\rangle = \langle \varphi_X(c^*)x,y\rangle$
\end{itemize}
for all $x,y\in X$ and $c\in \C$.

The tensor product C$^*$-correspondence $(X\otimes X, \C, \varphi_{X\otimes X})$ satisfies the following
\begin{itemize}
\item $xc\otimes y = x\otimes\varphi_X(c)y$
\item $(x\otimes y)c = x\otimes(yc)$
\item $\langle x_1\otimes x_2, y_1\otimes y_2\rangle = \langle y_1, \varphi_X(\langle x_1,x_2\rangle)y_2\rangle$
\item $\varphi_{X\otimes X}(c)(x\otimes y) = (\varphi_X(c) \otimes \id_X)(x\otimes y) = (\varphi_X(c)x)\otimes y$
\end{itemize}
for all $x,y\in X$ and $c\in \C$. From repeated uses of this one forms the Fock space \[
\F_X = \C \oplus X \oplus X^{\otimes 2} \oplus X^{\otimes 3} \oplus \cdots
\]
where $X^{\otimes n} = X^{\otimes n-1}\otimes X$, which gives $(\F_X,\C)$ as a C$^*$-correspondence. 

\begin{definition}
Consider the linear map $t_\infty : X \rightarrow \L(\F_X)$
\[
t_\infty(x)(c, x_{11}, x_{21}\otimes x_{22}, \dots)
\ = \ (xc, x\otimes x_{11}, x\otimes x_{21}\otimes x_{22}, \dots)
\]
and the $*$-homomorphism $\rho_\infty : \C \rightarrow \L(\F_X)$
\[
\rho_\infty(d)(c, x_{11}, x_{21}\otimes x_{22}, \dots)
\ = \ (dc, \varphi_X(d)x_{11}, \varphi_X(d)x_{21}\otimes x_{22}, \dots).
\]
Then $\T_{(X,\C)} = C^*(t_\infty(X), \rho_\infty(\C))$ is called the {\bfseries Cuntz-Pimsner-Toeplitz algebra} and \\ $\T_{(X,\C)}^+ = \overline{\alg}(t_\infty(X), \rho_\infty(\C))$ is called the {\bfseries tensor algebra}.
\end{definition}

It should be noted that these are often denoted $\T_X$ and $\T_X^+$. Both of these algebras enjoy universal properties. A {\bfseries completely contractive representation} $(t,\rho,H)$ of $(X,\C)$ consists of a completely contractive linear map $t:X\rightarrow B(H)$ and a $*$-homomorphism $\rho:\C \rightarrow B(H)$ such that
\[
t(\varphi_X(c)xd) = \rho(c)t(x)\rho(d),
\]
for all $x\in X$ and $c,d\in \C$. This representation is called {\bfseries isometric} if it additionally satisfies
\[
t(x)^*t(y) = \rho(\langle x,y\rangle)
\]
for all $x,y\in X$. Note that $(t_\infty, \rho_\infty)$ above is an isometric representation of $(X,\C)$.

\begin{proposition}[Muhly-Solel \cite{MS1}]\label{prop:integratedform}
If $(t,\rho,H)$ is a completely contractive representation of $(X,\C)$ then there exists a completely contractive homomorphism $\rho\rtimes t : \T_{(X,\C)}^+ \rightarrow \overline{\alg}(t(X),\rho(\C))$, called the {\bfseries integrated form} such that 
\[
(\rho\rtimes t)t_\infty = t \quad \textrm{and} \quad (\rho\rtimes t)\rho_\infty = \rho.
\]
If the representation is isometric then $\rho\rtimes t$ extends to a $*$-homomorphism of $\T_{(X,\C)}$ onto $C^*(t(X),\rho(\C))$.

Moreover, every completely contractive representation of $\T_{(X,\C)}^+$ is the integrated form of a completely contractive representation of $(X,\C)$.
\end{proposition}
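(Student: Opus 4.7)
The plan is to establish the three assertions in sequence, starting with the isometric case, bootstrapping to the completely contractive case via a dilation, and finally reversing the construction for the converse direction.

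For the isometric case, I would inductively define maps $t^{(n)} : X^{\otimes n} \to B(H)$ on elementary tensors by
\[
t^{(n)}(x_1 \otimes \cdots \otimes x_n) = t(x_1) t(x_2) \cdots t(x_n).
\]
The covariance relation $t(\varphi_X(c) x d) = \rho(c) t(x) \rho(d)$ ensures this descends to the balanced tensor product over $\C$, while the isometric condition $t(x)^* t(y) = \rho(\langle x, y\rangle)$ propagates by an induction unwrapping the inner product on tensor powers to give $t^{(n)}(\xi)^* t^{(n)}(\eta) = \rho(\langle \xi, \eta\rangle)$. Thus $(\bigoplus_n t^{(n)}, \rho)$ constitutes an isometric representation modeled on the Fock correspondence. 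Defining $\tilde\pi$ on words of the form $t_\infty(x_1)\cdots t_\infty(x_n)\rho_\infty(c)$ by $t(x_1)\cdots t(x_n)\rho(c)$, the isometric relations force the $C^*$-seminorms on either side to agree, so $\tilde\pi$ extends uniquely to a surjective $*$-homomorphism of $\T_{(X,\C)}$ onto $C^*(t(X), \rho(\C))$.

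For a general completely contractive representation, I would invoke a Muhly-Solel style isometric dilation: build a larger Hilbert space $K \supseteq H$ and an isometric representation $(V, \pi, K)$ such that $P_H V(x)|_H = t(x)$ and $P_H \pi(c)|_H = \rho(c)$, and for which $H$ is coinvariant for every $V(x)$ and $\pi(c)$. The isometric case then delivers a $*$-homomorphism $\pi \rtimes V : \T_{(X,\C)} \to B(K)$. Since $H$ is coinvariant for the generators $V(X) \cup \pi(\C)$, it is coinvariant for the entire image of $\T_{(X,\C)}^+$ under $\pi \rtimes V$, so the compression $P_H (\pi \rtimes V)(\cdot)|_H$ is automatically a completely contractive algebra homomorphism on $\T_{(X,\C)}^+$. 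This is my candidate for $\rho \rtimes t$, and by construction it restricts to $t$ on $t_\infty(X)$ and to $\rho$ on $\rho_\infty(\C)$.

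For the converse, given a completely contractive $\sigma : \T_{(X,\C)}^+ \to B(H)$, I would simply set $\rho := \sigma \circ \rho_\infty$ and $t := \sigma \circ t_\infty$. Since $\rho_\infty(\C)$ is a $C^*$-subalgebra of $\T_{(X,\C)}^+$, the restriction of the contractive homomorphism $\sigma$ to it is automatically a $*$-homomorphism, hence $\rho$ is a $*$-homomorphism; the covariance $t(\varphi_X(c) x d) = \rho(c) t(x) \rho(d)$ is inherited from the same algebraic identity among $t_\infty(x)$ and $\rho_\infty(c)$ inside the tensor algebra. The principal obstacle in the whole program is clearly the isometric dilation step, which cannot be bypassed by a universal property and requires a genuine construction, either via Arveson's extension theorem combined with Stinespring applied to a completely positive lift of $(t,\rho)$, or by a direct Schaeffer-type dilation of the associated row contraction adapted to the correspondence $(X, \C)$.
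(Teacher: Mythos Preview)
The paper does not prove this proposition at all; it is stated as background and attributed to Muhly--Solel \cite{MS1}, with no argument given. Your sketch follows precisely the standard Muhly--Solel route: establish the integrated form first for isometric representations via the Fock relations, bootstrap to the completely contractive case by taking an isometric coextension and compressing, and recover the converse by restricting along $t_\infty$ and $\rho_\infty$. So there is nothing to compare against in the paper itself, and your outline is the expected one.

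One point worth tightening: in the isometric case you say ``the isometric relations force the $C^*$-seminorms on either side to agree.'' That is stronger than what is true or needed. The paper defines $\T_{(X,\C)}$ concretely as $C^*(t_\infty(X),\rho_\infty(\C))$ on the Fock module, not as a universal object, so what you actually need is that the assignment $t_\infty(x)\mapsto t(x)$, $\rho_\infty(c)\mapsto\rho(c)$ is norm-\emph{decreasing} on the $*$-algebra they generate; equality would only hold for suitably faithful $(t,\rho)$. The contractivity is exactly the content of the universal property of the Toeplitz algebra (Pimsner, or Fowler--Raeburn's gauge-invariant uniqueness argument), and it does require a genuine argument beyond the relations you wrote down. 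Everything else in your plan---the well-definedness of $t^{(n)}$ via balancing, the inductive propagation of the isometric identity, the coinvariance of $H$ making the compression multiplicative, and the automatic $*$-preservation of $\sigma|_{\rho_\infty(\C)}$ in the converse---is correct as stated.
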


All of this technology can be simplified in the following realization theorem.

\begin{theorem}[Katsoulis-Ramsey, Theorem 7.5 \cite{KatRamMem}]\label{thm:realization}
Let $\C \subset B(H)$ be a C$^*$-algebra and $X\subset B(H)$ a closed $\C$-bimodule such that $X^*X\subseteq \C$. Then $\overline{\alg}(X,\C)$ is completely isometrically isomorphic to $\T_{(X,\C)}^+$ if and only if the map
\[
\C \ni c \mapsto c\otimes I \ \textrm{and} \ X \ni x \mapsto x\otimes U,
\]
where $U$ is the bilateral  or unilateral shift, extends to a well-defined, completely contractive homomorphism of $\overline{\alg}(X,\C)$.
\end{theorem}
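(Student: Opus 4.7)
The plan is to prove the two directions separately. The forward direction is a routine application of the universal property of $\T_{(X,\C)}^+$, while the reverse direction carries the substantive content and relies on a gauge-action argument.

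For the forward direction, assume $\overline{\alg}(X, \C) \cong \T_{(X,\C)}^+$ completely isometrically via the natural map. Define $\tilde t(x) = x \otimes U$ and $\tilde \rho(c) = c \otimes I$ acting on $H \otimes K$, where $K$ is $\ell^2(\bbN)$ or $\ell^2(\bbZ)$ according to the choice of $U$. The identity $\tilde \rho(c) \tilde t(x) \tilde \rho(d) = cxd \otimes U = \tilde t(cxd)$ and the bound $\|\tilde t(x)\| \leq \|x\|$ show that $(\tilde t, \tilde \rho)$ is a completely contractive representation of $(X, \C)$. By Proposition \ref{prop:integratedform}, it integrates to a completely contractive homomorphism $\tilde \rho \rtimes \tilde t : \T_{(X,\C)}^+ \to B(H \otimes K)$, and composing with the assumed isomorphism produces the desired completely contractive extension on $\overline{\alg}(X, \C)$.

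For the reverse direction, note that since $X^*X \subseteq \C$, the inclusion $(X, \C) \hookrightarrow B(H)$ is an isometric representation of the C$^*$-correspondence with inner product $\langle x, y \rangle = x^* y$. Proposition \ref{prop:integratedform} then produces a completely contractive surjection $\pi : \T_{(X,\C)}^+ \to \overline{\alg}(X, \C)$, and the goal is to show it is completely isometric. Let $\Phi : \overline{\alg}(X, \C) \to B(H \otimes K)$ be the completely contractive extension guaranteed by the hypothesis. Then $\Phi \circ \pi$ is precisely the integrated form of the pair $(\tilde t, \tilde \rho)$ from the forward direction, which is now an \emph{isometric} representation because
\[
\tilde t(x)^* \tilde t(y) = x^* y \otimes U^* U = x^* y \otimes I = \tilde \rho(\langle x, y \rangle).
\]
By Proposition \ref{prop:integratedform}, $\Phi \circ \pi$ therefore extends to a $*$-homomorphism on all of $\T_{(X,\C)}$.

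The crux is to show that $\Phi \circ \pi$ is completely isometric on $\T_{(X,\C)}^+$. This follows from a gauge-invariant uniqueness argument: the diagonal unitary $W_z \in B(K)$ sending $e_n \mapsto z^n e_n$ satisfies $W_z U W_z^* = z U$, so conjugation by $I \otimes W_z$ defines a strongly continuous action $\beta : \bbT \to \Aut(C^*(\tilde t(X), \tilde \rho(\C)))$ that scales $\tilde t$ by $z$ and fixes $\tilde \rho$. Since $\tilde \rho$ is manifestly faithful on $\C$, the standard gauge-invariant uniqueness theorem for $\T_{(X,\C)}$ gives injectivity of $\Phi \circ \pi$ on $\T_{(X,\C)}$, hence complete isometry of the $*$-homomorphism, which restricts to complete isometry on $\T_{(X,\C)}^+$. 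The inequalities
\[
\|a\| \geq \|\pi(a)\| \geq \|(\Phi \circ \pi)(a)\| = \|a\|
\]
at every matrix level, where the first uses complete contractivity of $\pi$ and the second uses complete contractivity of $\Phi$, force $\pi$ itself to be completely isometric.

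The principal obstacle is the clean invocation of the gauge-invariant uniqueness theorem; alternatively, one could bypass this external result by a direct Fej\'er-type averaging argument against $\beta$ to extract and norm-bound the homogeneous components of elements of $\T_{(X,\C)}^+$, matching them with the corresponding components in $\overline{\alg}(X \otimes U, \C \otimes I)$.
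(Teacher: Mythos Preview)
The paper does not supply its own proof of this theorem; it is quoted from \cite{KatRamMem}, so there is no in-paper argument to compare against. Your outline is the correct strategy, and the forward direction together with the sandwich inequality at the end are fine.

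The one genuine soft spot is the sentence ``the standard gauge-invariant uniqueness theorem for $\T_{(X,\C)}$ gives injectivity of $\Phi\circ\pi$ on $\T_{(X,\C)}$.'' For the Toeplitz algebra, a gauge action together with faithfulness of $\tilde\rho$ on $\C$ is \emph{not} enough to force injectivity on all of $\T_{(X,\C)}$: take $X=\C=\bbC$ with $U$ the bilateral shift, where the image is $C(\bbT)$ rather than the Toeplitz algebra. In the unilateral case the extra hypothesis in the Toeplitz gauge-invariant uniqueness theorem (that no nonzero $\tilde\rho(c)$ lies in the closed span of the elements $\tilde t(x)\tilde t(y)^*$) does hold, because $\tilde t(x)\tilde t(y)^*=xy^*\otimes(I-P_0)$ vanishes on $H\otimes e_0$ while $\tilde\rho(c)=c\otimes I$ does not; but you have not verified this, and it fails outright in the bilateral case. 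What you actually need, and what is true for either choice of $U$, is only that $\Phi\circ\pi$ be completely isometric on the nonselfadjoint part $\T_{(X,\C)}^+$. Your alternative Fej\'er averaging argument delivers exactly this: the gauge action $\beta_z$ recovers each homogeneous component, and since $(\tilde t,\tilde\rho)$ is isometric with $\tilde\rho$ faithful, each $X^{\otimes n}$ is mapped completely isometrically; Ces\`aro summation against $\beta$ then matches norms with those in $\T_{(X,\C)}^+$. So the fix is to promote the ``alternative'' to the main argument, or else to restrict to the unilateral shift when invoking gauge-invariant uniqueness and observe separately that the bilateral hypothesis implies the unilateral one by compressing to the invariant subspace $H\otimes\ell^2(\bbN)$.
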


\begin{corollary}
Let $\C \subset B(H)$ be a C$^*$-algebra and $X\subset B(H)$ a closed $\C$-bimodule such that $X^*X\subseteq \C$. Then 
\[
\T_{(X,\C)}^+ \ \simeq \ \T_{(X\otimes U, \C\otimes I)}^+ \ \simeq \ \overline{\alg}(X\otimes U, \C\otimes I).
\]
\end{corollary}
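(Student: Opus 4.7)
The plan is to deduce the corollary from Theorem \ref{thm:realization} in two steps. First, I would identify $(X, \C)$ with $(X \otimes U, \C \otimes I)$ as C$^*$-correspondences, yielding the first isomorphism by the universal property of the tensor algebra. Second, I would apply Theorem \ref{thm:realization} directly to the concrete bimodule $X \otimes U$ over $\C \otimes I$; the crux is that for this tensored bimodule, the hypothesis of the theorem is automatically satisfied, even though it may fail for the original $(X,\C)$.

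For the first step, $c \mapsto c \otimes I$ is a $*$-isomorphism of $\C$ onto $\C \otimes I$, and $x \mapsto x \otimes U$ is a linear map $X \to X \otimes U$ that intertwines both bimodule actions. Since $U^*U = I$, the inner product is preserved:
\[
(x \otimes U)^*(y \otimes U) \ = \ x^*y \otimes I \ = \ \langle x,y\rangle \otimes I.
\]
By Proposition \ref{prop:integratedform}, this C$^*$-correspondence isomorphism lifts to a completely isometric isomorphism $\T^+_{(X,\C)} \simeq \T^+_{(X\otimes U, \C\otimes I)}$.

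For the second step, let $L$ denote the Hilbert space on which $U$ acts, so that $X \otimes U \subset B(H \otimes L)$ is a bimodule over $\C \otimes I$. The inclusion $(X \otimes U)^*(X \otimes U) = X^*X \otimes I \subseteq \C \otimes I$ holds by hypothesis. What remains is to exhibit a completely contractive homomorphism extending
\[
c \otimes I \ \mapsto \ (c \otimes I) \otimes I_{L'}, \qquad x \otimes U \ \mapsto \ (x \otimes U) \otimes V,
\]
on $\overline{\alg}(X \otimes U, \C \otimes I)$, where $V$ is some shift on $L'$. I would choose $V$ to be the bilateral shift on $L' = \ell^2(\bbZ)$ and define a unitary $W$ on $H \otimes L \otimes L'$ by
\[
W(h \otimes e_n \otimes f) \ = \ h \otimes e_n \otimes V^n f,
\]
where $\{e_n\}$ is the standard basis of $L$. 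Since $V$ is invertible, $W$ is a well-defined unitary. A routine calculation yields
\[
W(c \otimes I_L \otimes I_{L'})W^* \ = \ c \otimes I_L \otimes I_{L'}, \qquad W(x \otimes U \otimes I_{L'})W^* \ = \ x \otimes U \otimes V,
\]
so the $*$-homomorphism $\Psi(A) = W(A \otimes I_{L'}) W^*$ of $B(H \otimes L)$ restricts on $\overline{\alg}(X \otimes U, \C \otimes I)$ to the desired completely contractive extension. Theorem \ref{thm:realization} then gives the second isomorphism.

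The main obstacle is constructing the unitary $W$ and verifying that it implements the correct map on the generators. The key observation is that $V^n$ must be defined for every index $n$ of the basis of $L$, which is why $V$ must be taken to be bilateral whenever $U$ is. Choosing $V$ as the bilateral shift handles the bilateral and unilateral cases of $U$ uniformly, after which the rest of the argument reduces to the first isomorphism and bookkeeping around Theorem \ref{thm:realization}.
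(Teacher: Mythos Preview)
Your proof is correct, and since the paper states the corollary without proof (treating it as immediate from Theorem~\ref{thm:realization}), your argument is a legitimate way to fill in the details. The two steps you identify---the correspondence isomorphism $(X,\C)\simeq(X\otimes U,\C\otimes I)$ for the first $\simeq$, and verifying the hypothesis of Theorem~\ref{thm:realization} for the pair $(X\otimes U,\C\otimes I)$ via the unitary $W$---are exactly what is needed, and your intertwining computation for $W$ is right. The observation that $V$ must be bilateral so that $V^n$ is defined for all indices $n$ appearing in the basis of $L$ (in particular when $U$ itself is bilateral) is the only subtlety, and you handle it correctly.
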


The upshot of this is that if you have a concrete C$^*$-correspondence then it is very easy to find its tensor algebra, and then to show the tensor algebra is semi-Dirichlet. 

\begin{proposition}[Davidson-Katsoulis \cite{DKDoc}]\label{prop:tensorsD}
The tensor algebra of a C$^*$-correspondence is semi-Dirichlet.
\end{proposition}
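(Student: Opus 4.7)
The plan is to apply Proposition \ref{prop:DKDirichlet}(ii): it suffices to exhibit a single completely isometric semi-Dirichlet representation of $\T^+_{(X,\C)}$. The Fock representation $(t_\infty, \rho_\infty)$ is the natural candidate, since by definition $\T^+_{(X,\C)} = \overline{\alg}(t_\infty(X), \rho_\infty(\C)) \subseteq \L(\F_X)$, so the task reduces to verifying the inclusion
$$\A^* \A \;\subseteq\; \overline{\A + \A^*}, \qquad \A := \T^+_{(X,\C)}.$$

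Setting $L_x := t_\infty(x)$ and $\pi(c) := \rho_\infty(c)$, the bimodule identity $\pi(c) L_x \pi(d) = L_{\varphi_X(c) x d}$ shows that every word in $t_\infty(X) \cup \rho_\infty(\C)$ reduces either to an element of $\pi(\C)$ or to a single monomial $L_{x_1} L_{x_2} \cdots L_{x_n}$ with $n \geq 1$. Thus $\A$ is the norm-closed linear span of $\pi(\C)$ together with all such monomials, and by continuity and bilinearity it is enough to check that $m^* m' \in \A + \A^*$ for any two monomials $m = L_{x_1} \cdots L_{x_n}$ and $m' = L_{y_1} \cdots L_{y_k}$, with mixed products involving factors from $\pi(\C)$ being handled by the same absorption trick.

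The key ingredient is the isometric relation
$$L_x^* L_y \;=\; \pi(\langle x, y\rangle) \;\in\; \pi(\C).$$
Starting from the innermost pair $L_{x_1}^* L_{y_1}$ inside $m^* m' = L_{x_n}^* \cdots L_{x_1}^* L_{y_1} \cdots L_{y_k}$, iteratively collapse it to $\pi(\langle x_1, y_1\rangle)$ and absorb the resulting $\pi$-element into an adjacent $L$ or $L^*$ via the bimodule identity. If $n \leq k$, the process terminates in a monomial $L_{z_1} \cdots L_{z_{k-n}} \in \A$; if $n > k$, it terminates in the adjoint of a monomial of length $n - k$, hence in $\A^*$. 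Either way $m^* m' \in \A + \A^*$, and taking closures yields the desired inclusion.

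The main obstacle is purely the bookkeeping in the absorption step: one must track how each collapsed $\pi(\langle x_j, y_j\rangle)$ is fed into the correct neighbouring creation operator, with the right action of $\varphi_X$, so that the intermediate expressions remain genuine monomials of the claimed length. Once this is carefully set up, the conclusion follows by a routine induction on $\min(n, k)$, after which the semi-Dirichlet property for $\T^+_{(X,\C)}$ is immediate.
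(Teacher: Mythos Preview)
Your argument is correct and rests on the same computation as the paper's: both use the isometric relation to collapse $(X^*)^j X^k$ into $\C$, $X^{k-j}$, or $(X^*)^{j-k}$, which immediately yields the semi-Dirichlet inclusion. The only real difference is in the model chosen for $\T^+_{(X,\C)}$. The paper first invokes the realization corollary to pass to a concrete copy $\overline{\alg}(X\otimes U,\C\otimes I)\subseteq B(H)$ where $X^*X\subseteq\C$ is literally a containment of subspaces of $B(H)$, and then performs the collapsing there; you stay in the Fock representation and use $L_x^*L_y=\pi(\langle x,y\rangle)$ directly. Your route is marginally more self-contained since it sidesteps Theorem~\ref{thm:realization}, while the paper's route makes the bookkeeping in the absorption step entirely invisible. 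One small point to tidy up: your representation lands in $\L(\F_X)$ rather than in $B(H)$ for a Hilbert space, so to literally invoke Proposition~\ref{prop:DKDirichlet}(ii) you should compose with a faithful $*$-representation of $\T_{(X,\C)}$ (or of $\L(\F_X)$) on a Hilbert space; the inclusion $\A^*\A\subseteq\overline{\A+\A^*}$ survives this passage verbatim.
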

\begin{proof}
Let $\C \subset B(H)$ be a C$^*$-algebra and $X\subset B(H)$ a closed $\C$-bimodule such that $X^*X\subseteq \C$. Then
\[
(X^*)^jX^k \ \subseteq \ \left\{ \begin{matrix} \C &  j=k \\ X^{k-j} & k > j \\ (X^*)^{j-k} & j > k\end{matrix} \right.
\]
By the previous corollary, it is easy now to obtain the desired result since
\[
\overline{\alg}(X\otimes U, \C\otimes I)^*\overline{\alg}(X\otimes U, \C\otimes I) \subseteq \overline{\overline{\alg}(X\otimes U, \C\otimes I) + \overline{\alg}(X\otimes U, \C\otimes I)^*}.\qedhere
\]
\end{proof}

A C$^*$-correspondence $(X,\C)$ is called a {\bfseries Hilbert bimodule} if it has a right $\C$-valued inner product $[\cdot,\cdot]$ that has the following compatibility with the original structure:
\[
\varphi_X([x,y])z = x\langle y,z\rangle
\]
for all $x,y,z\in X$.

\begin{theorem}[Kakariadis, Theorem 2.2 \cite{Kakariadis}]
The tensor algebra of a C$^*$-correspondence is Dirichlet if and only if the C$^*$-correspondence is a Hilbert bimodule.
\end{theorem}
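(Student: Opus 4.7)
The plan is to reduce the Dirichlet property to a single ``backwards'' inclusion, and then handle each direction by a different tool: the realization theorem for $(\Leftarrow)$, and a gauge-action argument on the C$^*$-envelope for $(\Rightarrow)$. By Proposition \ref{prop:DKDirichlet}(i), $\T^+_{(X,\C)}$ is Dirichlet precisely when both it and its adjoint are semi-Dirichlet. The first holds automatically by Proposition \ref{prop:tensorsD}, so everything reduces to the condition
\[
\epsilon\bigl(t_\infty(x)\bigr)\,\epsilon\bigl(t_\infty(y)\bigr)^*\ \in\ \overline{\epsilon(\T^+_{(X,\C)})+\epsilon(\T^+_{(X,\C)})^*}\qquad(x,y\in X),
\]
since every monomial of the form $t_\infty(\cdot)\cdots t_\infty(\cdot)^*$ can be reduced to such terms using the isometric identity $t_\infty(x)^*t_\infty(y)=\rho_\infty(\langle x,y\rangle)$.

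For $(\Leftarrow)$, assume $(X,\C)$ is a Hilbert bimodule with left inner product $[\cdot,\cdot]$. The first step is to realize $\C\subseteq B(H)$ faithfully and extend to a concrete representation of $X\subseteq B(H)$ (via the linking C$^*$-algebra of the Hilbert bimodule) so that both $X^*X\subseteq\C$ and $XX^*\subseteq\C$, with the second inclusion given by $xy^*=[x,y]$. Then $\alg(X,\C)+\alg(X,\C)^*$ is closed under multiplication: the only nontrivial products are of the form $X^j(X^*)^k$, and these collapse into $X^{j-k}$, $(X^*)^{k-j}$, or $\C$ depending on sign. Tensoring with the unilateral shift and invoking Theorem \ref{thm:realization} identifies $\overline{\alg}(X\otimes U,\C\otimes I)$ with $\T^+_{(X,\C)}$, and Proposition \ref{prop:KRDirichlet} upgrades ``generates a C$^*$-algebra'' to ``Dirichlet with C$^*$-envelope equal to that C$^*$-algebra''.

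For $(\Rightarrow)$, assume $\T^+_{(X,\C)}$ is Dirichlet. The main tool is the gauge action on the C$^*$-envelope: the circle action on $\T^+_{(X,\C)}$ that fixes $\rho_\infty(\C)$ and multiplies $t_\infty(X)$ by the scalar extends, by universality of the C$^*$-envelope, to an action $\gamma:\bbT\to\Aut\bigl(C^*_e(\T^+_{(X,\C)})\bigr)$, with associated faithful conditional expectation $E_0$ onto the fixed point subalgebra. Now $t_\infty(x)t_\infty(y)^*$ is homogeneous of degree zero, while any element of $\epsilon(\T^+_{(X,\C)})+\epsilon(\T^+_{(X,\C)})^*$ meets the fixed point algebra only in $\epsilon\bigl(\rho_\infty(\C)\bigr)$. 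Applying $E_0$ to a Dirichlet approximation of $t_\infty(x)t_\infty(y)^*$ therefore forces
\[
t_\infty(x)\,t_\infty(y)^*\ \in\ \rho_\infty(\C).
\]
Define $[x,y]:=\rho_\infty^{-1}\bigl(t_\infty(x)t_\infty(y)^*\bigr)\in\C$. Positivity of $[x,x]$ is automatic from $t_\infty(x)t_\infty(x)^*\geq0$. For the compatibility with the right inner product, compute
\[
t_\infty\bigl([x,y]\,z\bigr)=\rho_\infty([x,y])\,t_\infty(z)=t_\infty(x)\,t_\infty(y)^*t_\infty(z)=t_\infty(x)\,\rho_\infty(\langle y,z\rangle)=t_\infty\bigl(x\langle y,z\rangle\bigr),
\]
and use injectivity of $t_\infty$ to conclude $\varphi_X([x,y])\,z=x\langle y,z\rangle$, exhibiting $(X,\C)$ as a Hilbert bimodule.

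The main obstacle is the $(\Rightarrow)$ direction, and specifically justifying that the gauge action genuinely descends to the C$^*$-envelope together with a faithful degree-zero conditional expectation whose intersection with $\epsilon(\T^+_{(X,\C)})+\epsilon(\T^+_{(X,\C)})^*$ is exactly $\epsilon(\rho_\infty(\C))$. This rests on the identification of $C^*_e(\T^+_{(X,\C)})$ with the Cuntz--Pimsner algebra $\O_{(X,\C)}$ (Katsoulis--Kribs, Fowler--Muhly--Raeburn), together with the standard gauge action on $\O_{(X,\C)}$; once these are granted, the degree-zero collapse argument is short, and the $(\Leftarrow)$ direction is essentially bookkeeping on the realization side.
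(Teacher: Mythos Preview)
The paper does not prove this theorem; it is quoted from Kakariadis \cite{Kakariadis} and used as a black box, so there is no in-paper argument to compare against.

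Your sketch is correct and is essentially Kakariadis's own argument. For $(\Leftarrow)$ the bookkeeping with the realization theorem works as you say, provided you take $U$ to be the \emph{bilateral} shift so that $(X\otimes U)(X\otimes U)^*=XX^*\otimes I\subseteq\C\otimes I$; with the unilateral shift the projection $UU^*$ obstructs the collapse. For $(\Rightarrow)$ your gauge/Fourier argument on $C^*_e(\T^+_{(X,\C)})=\O_{(X,\C)}$ is exactly the right mechanism: the key point, which you identify, is that the degree-zero component of $\epsilon(\T^+_{(X,\C)})+\epsilon(\T^+_{(X,\C)})^*$ is precisely $\epsilon(\rho_\infty(\C))$, so the faithful expectation $E_0$ forces $\epsilon(t_\infty(x))\epsilon(t_\infty(y))^*\in\epsilon(\rho_\infty(\C))$. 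One notational caveat: your displayed computations silently identify $t_\infty,\rho_\infty$ with their images in $\O_{(X,\C)}$; all equalities should be read there via $\epsilon$, and injectivity of these images (needed to extract $[x,y]$ and the compatibility relation) follows from the Katsura construction.
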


This second inner product is akin to finding a concrete realization $X,\C\subseteq B(H)$ such that $XX^*$ and $X^*X\subseteq \C$.

\begin{example}\label{ex:sD}
Many nice operator algebras are tensor algebras of C$^*$-correspondences and so are semi-Dirichlet or Dirichlet:
\begin{enumerate}
\item The disc algebra $A(\bbD) \ \simeq \ \T_{(\bbC,\bbC)}^+$ is Dirichlet.
\item The noncommutative disc algebra $\A_d \ \simeq \ \T_{(\bbC^d, \bbC)}^+$ \cite{Popescu} is semi-Dirichlet, but not Dirichlet for $d\geq 2$.
\item The algebra of upper triangular matrices $T_n \ \simeq \ \T_{(\bbC^{n-1}, \bbC^n)}^+$, where $X$ is the superdiagonal and $\C$ is the diagonal of $T_n$, is Dirichlet.
\item $\left\{ \left[\begin{matrix} a&&b \\ &c&d\\ &&e\end{matrix} \right] : a,b,c,d,e\in\bbC \right\} \ \simeq \ \T_{(\bbC^2,\bbC^3)}^+$ where $\C$ is the diagonal and $X$ are the matrices with 0 diagonal, is semi-Dirichlet but not Dirichlet.
\item Every C$^*$-algebra $\C$ is a $\C$-bimodule over itself and thus $(\C,\C)$ is a Hilbert bimodule. Thus, 
\[
\T_{(\C,\C)}^+ \ \simeq \ \overline{\alg}(\C\otimes U, \C\otimes I)
\]
is Dirichlet, for $U$ the bilateral shift.
\item Suppose $(\C,\alpha)$ is a C$^*$-dynamical system, $\alpha:\C\rightarrow \C$ is a $*$-homomorphism. The semicrossed product algebra $\C \times_\alpha \bbZ^+_0$ is completely isometrically isomorphic to the tensor algebra of $(\C,\C, \varphi)$ where $\varphi(c)d = \alpha(c)d$.
\item The tensor algebra of a multivariable commutative C$^*$-dynamical system \cite[Section 5]{DKMulti}.
\item Tensor algebras arising from directed graphs. These are Dirichlet if and only if every vertex is the source and range of at most one edge \cite[Example 3.2]{Kakariadis}. 
\end{enumerate}
\end{example}

As mentioned before, when Davidson and the second author introduced the definition of semi-Dirichlet they did not know of any examples that weren't tensor algebras. The above list shows that there are a large variety of these algebras which can hide their tensor nature.

 The previous theorem was used to produce the first example of a Dirichlet (or semi-Dirichlet) algebra that is not a tensor algebra.

\begin{example}[Kakariadis, Proposition 3.10 \cite{Kakariadis}]
If $K = \{ z\in\bbC : |z-1| \leq 1 \ \textrm{or} \ |z+1| \leq 1\}$, then $P(K)$, the closure of the polynomials over $K$, is a Dirichlet algebra since \[
\overline{P(\partial K) + P(\partial K)^*} = C(\partial K).
\]
Moreover, $P(K)$ is not completely isometrically isomorphic to a tensor algebra of a C$^*$-correspondence. The proof uses the fact that there is only one way to construct a Hilbert bimodule of the form $(X,\bbC)$.
\end{example}

Direct limits of Dirichlet or semi-Dirichlet operator algebras are Dirichlet or semi-Dirichlet, respectively. 
Triangular limit algebras \cite{Power} then are a great source of Dirichlet or semi-Dirichlet operator algebras. These operator algebras are studied in the paper \cite{KatRamLimit} where isometric isomorphism to a tensor algebra of a C$^*$-correspondence is characterized by the fundamental relation being a tree structure with a well-behaved $\bbZ_0^+$-cocycle. 
The nicest type of triangular limit algebras are the so-called TUHF algebras (Triangular UHF or Triangular Uniformly Hyperfinite). Suppose $\C = \varinjlim (M_{k_n}, \varphi_n)$ is a UHF algebra, meaning $\varphi_n : M_{k_n} \rightarrow M_{k_{n+1}}$ is a unital, injective $*$-homomorphism, then if $\varphi(T_{k_n}) \subseteq T_{k_{n+1}}$ the algebra $\A = \varinjlim (T_{k_n}, \varphi_{k_n})$ is called TUHF.

\begin{example}[Katsoulis-Ramsey \cite{KatRamLimit}]
Recall, the standard embedding of $M_n$ into \\ $M_k(M_n)$ is
\[
M_n \ni A \ \mapsto \ I_k\otimes A = A\oplus A \oplus \cdots \oplus A \in M_k(M_n)
\]
whereas the refinement embedding of $M_n$ into $M_k(M_n)$ is tensoring in the other order
\[
M_n \ni A \ \mapsto \ A\otimes I_k \in M_k(M_n).
\]
For instance, the refinement embedding of $M_2$ into $M_4$ is
\[
\left[\begin{matrix} a&b\\ c&d\end{matrix}\right] \ \mapsto \ \left[\begin{matrix}a &&b \\ & a &&b \\ c&&d \\ &c&&d\end{matrix}\right].
\]
Any standard embedding TUHF algebra is completely isometrically isomorphic to a tensor algebra of a C$^*$-correspondence whereas any refinement embedding TUHF algebra is not isometrically isomorphic to a tensor algebra. In fact, one of the main heuristics of the paper \cite{KatRamLimit} is that if it isn't standard then it isn't tensor.
\end{example}

In \cite{KatRamMem} it was shown that the Dirichlet and semi-Dirichlet properties make the study of crossed product operator algebras much more tractable. This will be covered in detail later in the paper. For now it is good to point out that there are examples of crossed product operator algebras that are Dirichlet (or semi-Dirichlet and not Dirichlet) that are not tensor algebras \cite[Theorem 5.2, 5.4, and surrounding discussion]{KatRamMem}.

Lastly in this section, we turn to an interesting class of Dirichlet operator algebras.

\begin{proposition}
Suppose $\A$ is an operator algebra. Define the operator space
\[
\B \ = \ \left[\begin{matrix} \A & 0 \\ \overline{\A + \A^*} & \A^* \end{matrix}\right].
\]
Then $\A$ is semi-Dirichlet if and only if $\B$ is an operator algebra.
Moreover, the following are equivalent:
\begin{enumerate}
    \item $\A$ is Dirichlet
    \item $\B$ is a semi-Dirichlet operator algebra
    \item $\B^*$ is a semi-Dirichlet operator algebra
    \item $\B$ is a Dirichlet operator algebra
\end{enumerate}
\end{proposition}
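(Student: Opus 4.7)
The plan is three-fold: first establish the preliminary ``iff'' claim, then identify the C*-envelope of $\B$, and then reduce the four-way equivalence to matrix-entry computations.

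For the preliminary claim, I would embed $\A \hookrightarrow C^*_e(\A)$ completely isometrically so that $\B$ is a concrete operator subspace of $M_2(C^*_e(\A))$. The direct product computation
\[
\begin{bmatrix} a_1 & 0 \\ x_1 & b_1 \end{bmatrix} \begin{bmatrix} a_2 & 0 \\ x_2 & b_2 \end{bmatrix} = \begin{bmatrix} a_1 a_2 & 0 \\ x_1 a_2 + b_1 x_2 & b_1 b_2 \end{bmatrix}
\]
keeps the $(1,1)$- and $(2,2)$-entries inside $\A$ and $\A^*$ automatically, so only the $(2,1)$-entry matters. Expanding $x_1, x_2 \in \overline{\A+\A^*}$ produces terms of the form $\A^*\A$, and $\overline{\A+\A^*}$ being closed under such products is exactly the semi-Dirichlet condition. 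Conversely, feeding $x_1 = b^*$, $a_2 = a$, $b_1 = x_2 = 0$ forces $b^*a \in \overline{\A+\A^*}$ for all $a,b \in \A$ whenever $\B$ is closed under multiplication.

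Next, assuming $\A$ is semi-Dirichlet, I would show $C^*_e(\B) = M_2(C^*_e(\A))$ with the natural inclusion $\rho: \B \hookrightarrow M_2(C^*_e(\A))$ serving as $\epsilon_\B$. A short induction on entries gives $C^*(\rho(\B)) = M_2(C^*_e(\A))$: both diagonal corners contain copies of $\A$ and $\A^*$, iterated products of which span $C^*_e(\A)$, while products across corners fill in the off-diagonals. For the Shilov boundary ideal, the standard bijection $J \leftrightarrow M_2(J)$ between ideals of $C^*_e(\A)$ and those of $M_2(C^*_e(\A))$ reduces matters to showing the corresponding $J$ is a boundary ideal of $\A$ in the $(1,1)$-corner; since $C^*_e(\A)$ is already the C*-envelope of $\A$, $J = 0$. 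Hence $C^*_e(\B) = M_2(C^*_e(\A))$ and $\overline{\epsilon_\B(\B)+\epsilon_\B(\B)^*} = M_2(\overline{\A+\A^*})$.

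The four equivalences then reduce to reading off matrix entries. For $(1)\Rightarrow(4)$: Dirichlet of $\A$ upgrades $M_2(\overline{\A+\A^*})$ to the full C*-algebra $M_2(C^*_e(\A))$, making $\B$ Dirichlet (also immediate from Proposition~\ref{prop:KRDirichlet}). The implications $(4)\Rightarrow(2)$ and $(4)\Rightarrow(3)$ are immediate from Proposition~\ref{prop:DKDirichlet}(1). For $(2)\Rightarrow(1)$, the $(2,2)$-entry of $\epsilon_\B(\B)^*\epsilon_\B(\B)$ is $\A\A^*$, which under semi-Dirichlet of $\B$ must lie in the $(2,2)$-entry of $\overline{\epsilon_\B(\B)+\epsilon_\B(\B)^*} = M_2(\overline{\A+\A^*})$; hence $\A^*$ is semi-Dirichlet, and Proposition~\ref{prop:DKDirichlet}(1) combined with $\A$ being semi-Dirichlet yields $\A$ Dirichlet. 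The proof of $(3)\Rightarrow(1)$ is symmetric, extracting the same inclusion from the $(1,1)$-entry of $\epsilon_\B(\B)\epsilon_\B(\B)^*$. The main obstacle is the identification of $C^*_e(\B)$; once this is secured, the rest is a routine inspection of $2\times 2$ matrices.
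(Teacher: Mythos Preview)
Your proof is correct and follows essentially the same route as the paper's, but is more explicit at one key point. The paper argues the implications $(2)\Rightarrow(1)$ and $(3)\Rightarrow(1)$ by computing $\B^*\B$ (resp.\ $\B\B^*$) directly in the concrete embedding $\B\subset M_2(C^*_e(\A))$, without justifying that this embedding realizes the C*-envelope of $\B$; since semi-Dirichlet is defined in the envelope, your explicit identification $C^*_e(\B)=M_2(C^*_e(\A))$ genuinely fills that step. A minor cosmetic difference: for $(2)\Rightarrow(1)$ the paper extracts $(\overline{\A+\A^*})^2\subseteq\overline{\A+\A^*}$ from the $(1,1)$-corner of $\B^*\B$, which gives $\A^*\A,\A\A^*\subseteq\overline{\A+\A^*}$ simultaneously, while you read off $\A\A^*$ from the $(2,2)$-corner and combine it with the already-known semi-Dirichlet property of $\A$; both reach the same conclusion.
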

\begin{proof}
The first result is due to the fact that $\A$ being semi-Dirichlet is equivalent to
\[
\A^*(\overline{\A + \A^*})\A \ \subseteq \ \overline{\A + \A^*}.
\]

For the second result, if $\A$ is Dirichlet, then by Proposition \ref{prop:DKDirichlet} $\overline{\B + \B^*} = M_2(C^*_e(\A))$, which by Proposition \ref{prop:KRDirichlet} implies that $\B$ is Dirichlet.

On the other hand, if $\B$ or $\B^*$ is semi-Dirichlet then $\B^*\B$ or $\B\B^*$ is in $\overline{\B + \B^*}$. Either one of these implies that $(\overline{\A + \A^*})^2 \subseteq \overline{\A + \A^*}$, which implies that $\A$  and $\A^*$ are semi-Dirichlet. Therefore, $\A$ is Dirichlet by Proposition \ref{prop:DKDirichlet}.
\end{proof}

Hartz used this algebra $\B$ in the case for $\A = A(\bbD)$ to give an example of a residually finite-dimensional (RFD) operator algebra where the maximal C$^*$-cover is not RFD \cite{Hartz}.

\begin{proposition}\label{prop:HartzDirichlet}
The Dirichlet operator algebra of Hartz
\[
\B\ = \ \left[\begin{matrix} A(\bbD) & 0 \\ C(\bbT) & A(\bbD)^* \end{matrix}\right]
\]
is not completely isometrically isomorphic to a tensor algebra of a C$^*$-correspondence. 
\end{proposition}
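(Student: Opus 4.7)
The plan is to compute structural invariants of $\B$ and derive a contradiction with any purported tensor-algebra realisation. First, $\B$ is Dirichlet (by the preceding proposition) and $\overline{A(\bbD)+A(\bbD)^*}=C(\bbT)$, so a direct block computation gives $\overline{\B+\B^*}=M_2(C(\bbT))$; Proposition~\ref{prop:KRDirichlet} then yields $C^*_e(\B)\simeq M_2(C(\bbT))$. Inside this C$^*$-envelope, an element of $\B$ whose adjoint again lies in $\B$ must have vanishing lower-left entry and diagonal entries in $A(\bbD)\cap A(\bbD)^*=\bbC$, giving $\B\cap\B^*=\bbC p_1+\bbC p_2\simeq\bbC^2$, where $p_1,p_2$ are the diagonal matrix units.

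Suppose for contradiction that $\B$ is completely isometrically isomorphic to $\T^+_{(X,\C)}$ for some C$^*$-correspondence. Kakariadis's theorem forces $(X,\C)$ to be a Hilbert bimodule. The coefficient C$^*$-algebra $\C$ embeds as a $*$-closed C$^*$-subalgebra of $C^*_e(\T^+_{(X,\C)})$ sitting inside $\T^+_{(X,\C)}\simeq\B$, so $\C\subseteq\B\cap\B^*=\bbC^2$. Unitality of $\B$ forces $1_\C=1_\B$, leaving only $\C=\bbC$ or $\C=\bbC^2$.

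The case $\C=\bbC$ is handled quickly: any Hilbert $\bbC$-bimodule is at most one-dimensional, since applying $\varphi_X([x,y])z=x\langle y,z\rangle$ with $z=y$ shows every $x$ is a scalar multiple of any chosen nonzero $y$. Hence $\T^+_{(X,\C)}$ would be $\bbC$ or $A(\bbD)$, both commutative, contradicting the noncommutativity of $\B$.

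The main obstacle is $\C=\bbC^2$. Here $p_1,p_2\in\C$, and $X$ decomposes as $\bigoplus_{i,j}X_{ij}$ with $X_{ij}=p_iXp_j$, each of which is a Hilbert $\bbC$-bimodule and therefore at most one-dimensional. The Hilbert-bimodule compatibility severely constrains this decomposition: given nonzero generators $x\in X_{ij}$ and $y\in X_{il}$ with $j\neq l$, the identity $\varphi_X([y,y])x=y\langle y,x\rangle$ has left side $\varphi_X(p_i)x=x$ (after normalising $[y,y]=p_i$) and right side $0$, since $\langle y,x\rangle\in p_l\C p_j=0$ by commutativity of $\C$; this forces $x=0$. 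A symmetric argument using $[x,y]\in p_i\C p_k=0$ handles the case of $X_{ij}$ and $X_{kj}$ with $i\neq k$. Hence the nonzero indices $(i,j)$ form a partial permutation of $\{1,2\}$. But matching $\B$'s block structure requires $X_{12}=0$ (since $p_1\B p_2=0$), then $X_{11}\neq 0$ (to produce $p_1\B p_1=A(\bbD)\supsetneq\bbC$), and then $X_{21}\neq 0$ (the only remaining $X_{ij}$ that can contribute to $p_2\T^+p_1$, as needed for $p_2\B p_1=C(\bbT)\neq 0$). Since $X_{11}$ and $X_{21}$ both lie in column $j=1$, the partial-permutation constraint is violated, yielding the desired contradiction.
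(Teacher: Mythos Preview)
Your proof is correct and takes a genuinely different route from the paper's. Both arguments begin by identifying $\C$ with (a subalgebra of) $\B\cap\B^*=\bbC E_{11}\oplus\bbC E_{22}$, but then diverge. The paper invokes the realisation theorem (Theorem~\ref{thm:realization}) to identify each diagonal corner $\overline{\alg}(E_{ii}XE_{ii},\bbC E_{ii})$ as a tensor algebra, hence as $A(\bbD)$ or $A(\bbD)^*$; this pins down $E_{11}XE_{11}=\bbC uE_{11}$ for a unitary $u\in A(\bbD)$. The contradiction is then a Fock-grading trick: for any nonzero $fE_{21}\in E_{22}XE_{11}$ one rewrites $fE_{21}=(\bar u E_{22})(fE_{21})(uE_{11})\in \T^+X^2$, violating $X\cap\T^+X^2=\{0\}$. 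Your argument instead exploits Kakariadis's criterion to upgrade $(X,\C)$ to a Hilbert bimodule, then carries out a bare-hands classification: the compatibility identity forces each $X_{ij}$ to be at most one-dimensional and the support pattern of nonzero $X_{ij}$'s to be a partial permutation, which is incompatible with the block shape of $\B$. Your approach avoids both the realisation theorem and the Fock grading, trading them for Kakariadis's theorem and an explicit structure result for Hilbert bimodules over $\bbC^2$; the paper's approach is shorter but leans on more machinery from \cite{KatRamMem}. One small point worth making explicit in your write-up: the identification of the minimal projections of $\C$ with $E_{11},E_{22}$ (in some order) uses that the completely isometric isomorphism $\T^+\to\B$ extends to a $*$-isomorphism of C$^*$-envelopes, so that $\C\hookrightarrow\B\cap\B^*$ is a unital $*$-embedding of $\bbC^2$ into $\bbC^2$; after that, your choice of labelling so that $p_1\B p_2=0$ is a harmless ``without loss of generality''.
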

\begin{proof}
For the sake of contradiction suppose $\B$ is completely isometrically isomorphic to the tensor algebra $\T_{(X,\C)}^+$ of the C$^*$-correspondence $(X,\C)$. In particular, embed $X,\C \subset \B$ so that
\[
\B = \overline{\alg}(X, \C) \simeq \T_{(X,\C)}^+.
\] 
By Theorem \ref{thm:realization} the map
\[
\C \ni c \mapsto c\otimes I \quad \textrm{and} \quad X \ni x \mapsto x\otimes U,
\]
extends to a well-defined, completely contractive homomorphism on $\B$.

We know that $\C = \B \cap \B^* = \bbC E_{11} \oplus \bbC E_{22}$.
And so, for $i=1,2$, $(E_{ii}XE_{ii}, \bbC E_{ii})$ is a C$^*$-correspondence. Thus, from above this still implies 
\[
\bbC E_{ii} \ni c \mapsto c\otimes I \quad \textrm{and} \quad E_{ii}XE_{ii} \ni x \mapsto x\otimes U,
\]
extends to a well-defined, completely contractive homomorphism. Hence,
\[
\T^+_{(E_{11}XE_{11}, \bbC E_{11})} \simeq \overline{\alg}((E_{11}XE_{11}, \bbC E_{11}) = A(\bbD), \quad \textrm{and}
\]
\[
\T^+_{(E_{22}XE_{22}, \bbC E_{22})} \simeq \overline{\alg}((E_{22}XE_{22}, \bbC E_{22}) = A(\bbD)^*.
\]
This implies that there are unitaries $u,v\in A(\bbD)$, with spectrum $\bbT$, such that $E_{11}XE_{11} = \bbC u E_{11}$ and $E_{22}XE_{22} = \bbC v^* E_{22}$. In fact, it is shown in \cite[Corollary 2.7]{KatRamIso} that two semicrossed product algebras of C$^*$-dynamical systems are isometrically isomorphic if and only if the two systems are outer conjugate.

Now there must be something else in $X$, specifically in $E_{22}XE_{11}$, otherwise the tensor algebra is just $A(\bbD) \oplus A(\bbD)^*$. Let $0\neq f E_{21}\in  E_{22}XE_{11}$ for $f\in C(\bbT)$.

Now $u^{-1} = \bar u \in A(\bbD)^*$. Thus, 
\[
X \ni fE_{21} = \bar u E_{22} f E_{21} uE_{11} \in \T_{(X,\C)}^+X^2,
\]
and $X\cap \T_{(X,\C)}^+ X^2 = \{0\}$, a contradiction.
Therefore, $\B$ is not completely isometrically isomorphic to a tensor algebra of a C$^*$-correspondence.
\end{proof}

Most of the time this $\B$ algebra will not be a tensor algebra. One exception is when dealing with finite algebras.

\begin{example}
Consider the $\B$ algebra above for $\A = T_n$. Define
\[
J_n \ = \ \left[ \begin{matrix} &&1 \\ &\iddots \\ 1
\end{matrix} \right] \in M_n
\]
Then 
\[
\B = \left[\begin{matrix}
    J_n & 0 \\ 0 & I_n
\end{matrix}\right]\left[\begin{matrix}
    T_n & 0 \\ M_n & T_n^*
\end{matrix}\right]\left[\begin{matrix}
    J_n & 0 \\ 0 & I_n
\end{matrix}\right] = \left[\begin{matrix}
    T_n^* & 0 \\ M_n & T_n^*
\end{matrix}\right] = T_{2n}^* = J_{2n}T_{2n}J_{2n}
\]
is a tensor algebra of a C$^*$-correspondence.
\end{example}

\section{Semi-Dirichlet C*-covers}

We move now to considering representation theory. However, the collection of all semi-Dirichlet representations may be ill-behaved. Instead we will focus on the completely isometric semi-Dirichlet representations.

\begin{definition}
Suppose $\A$ is an operator algebra. A {\bfseries C$^*$-cover} of $\A$ is $(\C,\iota)$ where $\iota:\A \rightarrow \C$ is a completely isometric homomorphism and $\C = C^*(\iota(\A))$.
\end{definition}

Suppose $(\C_j,\iota_j), j=1,2$ are C$^*$-covers of an operator algebra $\A$. A {\bfseries morphism of C$^*$-covers} is a $*$-homomorphism $\pi : \C_2\rightarrow \C_1$ such that $\pi\iota_1 = \iota_2$. If this happens, $\pi$ is unique and it is denoted $(\C_1,\iota_1) \preceq (\C_2,\iota_2)$. Thus, if $(\C_1,\iota_1) \preceq (\C_2,\iota_2) \preceq (\C_1,\iota_1)$ then $\pi$ is a $*$-isomorphism. This is denoted $(\C_1,\iota_1) \sim (\C_2,\iota_2)$ and is an equivalence relation.
For every C$^*$-cover $(\C,\iota)$ we have
\[
(C^*_e(\A), \epsilon) \ \preceq \ (\C,\iota) \ \preceq \ (\cmax(\A),\mu).
\]

The collection of equivalence classes $[\C,\iota]$ under $\sim$ is denoted $\cstarlattice(\A)$. This is a set since it is in one-to-one correspondence with the boundary ideals for $\A$ in $\cmax(\A)$. The induced partial order under $\preceq$ is in fact a complete lattice order. The join of an arbitrary collection $[\C_\lambda, \iota_\lambda] \in \cstarlattice(\A), \lambda\in \Lambda$ is given by the direct sum of the representations
\[
\bigvee_{\lambda} [\C_\lambda, \iota_\lambda] = \left[ C^*\left(\left(\bigoplus_\lambda \iota_\lambda\right)(\A)\right),  \bigoplus_\lambda \iota_\lambda\right]
\]
while the meet is obtained by taking the quotient by the ideal of the join that is generated by all of the individual ideals:
\[
\bigwedge_\lambda [\C_\lambda, \iota_\lambda] = \left. \left(\bigvee_{\lambda} [\C_\lambda, \iota_\lambda]\right) \middle/ \ \overline{\sum_\lambda \ker\left( \bigvee_{\mu} [\C_\mu, \iota_\mu] \rightarrow [\C_\lambda, \iota_\lambda]\right)}\right.
\]
This theory was developed in \cite[Chapter 2]{BlechLM}, \cite{Hamidi}, and \cite{Thompson}. 

\begin{remark}
The first and third authors write extensively about the lattice of C$^*$-covers and its equivalences in \cite{HumRam}. We note here that the semi-Dirichlet property is not preserved by any of these equivalences. In particular, \cite[Proposition 2.8]{HumRam} gives that $\A$ and $\A^*$ are lattice intertwined, the strongest of the equivalences, but $\A$ can be semi-Dirichlet while $\A^*$ is not, e.g. (iv) in Example \ref{ex:sD}.
\end{remark}

\begin{definition}
A C$^*$-cover $(\C, \iota)$ will be called {\bfseries semi-Dirichlet} if $\iota$  is a semi-Dirichlet completely isometric representation of $\A$.
\end{definition}

 By Proposition \ref{prop:DKDirichlet} an operator algebra is semi-Dirichlet if and only if it has a semi-Dirichlet C$^*$-cover. 
 

\begin{definition}
A representation $\rho : \A \rightarrow B(H)$ of an operator algebra $\A$ is called {\bfseries Shilov} if there is a $*$-homomorphism $\pi : C^*_e(\A) \rightarrow B(K)$ where $H\subseteq K$ is an invariant subspace of $\pi(A)$ such that $\rho(a) = \pi(a)|_H$ for all $a\in\A$.
\end{definition}

Davidson and the second author used Shilov representations in \cite{DKDoc} but this idea is older, cf. \cite[Definition 4.1]{MS1}.

\begin{proposition}\label{prop:shilov}
Suppose $\rho$ is a completely isometric representation of a semi-Dirichlet operator algebra. Then $\rho$ is a semi-Dirichlet representation if and only if it is a Shilov representation.
\end{proposition}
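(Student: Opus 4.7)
The plan is to handle the two implications separately. For the easier direction, ``Shilov $\Rightarrow$ semi-Dirichlet representation'': assume $\rho(a) = \pi(\epsilon(a))|_H$ with $\pi:C^*_e(\A)\to B(K)$ a $*$-homomorphism and $H\subseteq K$ invariant for $\pi(\epsilon(\A))$. Since $\A$ is semi-Dirichlet, I can write $\epsilon(a)^*\epsilon(b) = \lim_n(\epsilon(c_n) + \epsilon(d_n)^*)$ in $C^*_e(\A)$, apply $\pi$, and compress to $H$. The invariance of $H$ yields $P_H\pi(\epsilon(a))^*\pi(\epsilon(b))|_H = \rho(a)^*\rho(b)$, so $\rho(a)^*\rho(b) = \lim_n(\rho(c_n) + \rho(d_n)^*)$ lies in $\overline{\rho(\A)+\rho(\A)^*}$, as required.

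For the converse, the strategy is to construct a concrete dilation through Paulsen--Arveson--Stinespring and then use the semi-Dirichlet property of $\rho$ to upgrade the generic semi-invariance of $H$ into genuine invariance. First, Paulsen's lemma gives a unique UCP extension $\epsilon(a)+\epsilon(b)^* \mapsto \rho(a)+\rho(b)^*$ from $\epsilon(\A)$ to the operator system $\epsilon(\A)+\epsilon(\A)^*$; Arveson's extension theorem then lifts this to a UCP map $\Sigma: C^*_e(\A) \to B(H)$. Applying Stinespring produces a $*$-representation $\pi:C^*_e(\A)\to B(K)$ and an isometry $V:H\hookrightarrow K$ with $\Sigma = V^*\pi V$. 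Identifying $H$ with $VH$ and decomposing $K = H\oplus H^\perp$, the problem reduces to showing that the off-diagonal piece $C(a) := P_{H^\perp}\pi(\epsilon(a))|_H$ vanishes for every $a\in\A$, since $C(a)=0$ for all $a$ is precisely the statement that $H$ is $\pi(\epsilon(\A))$-invariant.

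The crucial identity driving this last step is
\[
\Sigma(\epsilon(a)^*\epsilon(b)) \ = \ \rho(a)^*\rho(b),
\]
because a standard block-matrix computation shows that the $(1,1)$ corner of $\pi(\epsilon(a))^*\pi(\epsilon(b))$ equals $\rho(a)^*\rho(b) + C(a)^*C(b)$, and hence the identity forces $C(a)^*C(a)=0$ and $C(a)=0$. To prove the identity I would invoke the universal property of the C$^*$-envelope: since $\rho$ is completely isometric, $(C^*(\rho(\A)),\rho)$ is a C$^*$-cover of $\A$, so there is a $*$-homomorphism $q:C^*(\rho(\A))\to C^*_e(\A)$ with $q\circ\rho=\epsilon$. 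The semi-Dirichlet property of $\rho$ then supplies sequences $c_n,d_n\in\A$ with $\rho(c_n)+\rho(d_n)^* \to \rho(a)^*\rho(b)$ in norm in $B(H)$; applying $q$ transports this convergence into $C^*_e(\A)$ as $\epsilon(c_n)+\epsilon(d_n)^* \to \epsilon(a)^*\epsilon(b)$, and then applying $\Sigma$ brings us back to $\rho(c_n)+\rho(d_n)^* \to \Sigma(\epsilon(a)^*\epsilon(b))$, producing the identity by uniqueness of limits. The main obstacle is precisely this shuttling of convergence through $q$ and $\Sigma$: it is where the semi-Dirichlet hypothesis on $\rho$ (as opposed to on $\A$) is actually used, and it relies essentially on $\rho$ being completely isometric so that the C$^*$-cover $(C^*(\rho(\A)),\rho)$, and with it the morphism $q$, exist at all.
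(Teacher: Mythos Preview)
Your proof is correct. For the forward direction you spell out exactly the compression argument that the paper cites from \cite{DKDoc}, so there is no difference there.

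For the converse you take a genuinely different route. The paper first compresses to $\overline{\rho(\A)H}$ to make $\rho$ cyclic, then invokes the Davidson--Katsoulis theory: $\rho$ has a unique minimal extremal coextension $\sigma$, which is automatically Shilov, and the paper uses precisely your shuttle argument through $q:C^*(\rho(\A))\to C^*_e(\A)$ to compute $\langle\sigma(a_1)h_1,\sigma(a_2)h_2\rangle=\langle\rho(a_1)h_1,\rho(a_2)h_2\rangle$, forcing $\sigma=\rho\oplus *$ and hence $\sigma=\rho$ by minimality. You bypass the extremal-coextension machinery entirely: you build a Stinespring dilation of an Arveson--Paulsen extension $\Sigma$ of $\rho$ and use the same shuttle identity, now in the form $\Sigma(\epsilon(a)^*\epsilon(b))=\rho(a)^*\rho(b)$, to annihilate the off-diagonal block $C(a)$ directly via $C(a)^*C(a)=0$. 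Your argument is thus more self-contained, resting only on standard Paulsen--Arveson--Stinespring dilation rather than on the specific semi-Dirichlet dilation theory of \cite{DKDoc}; the paper's route, by contrast, situates the result within that existing framework. One small technical remark: your use of Paulsen's lemma and the identification $H\cong VH$ tacitly assume $\Sigma$ is unital, so you should preface the argument with the reduction to the non-degenerate case (compress to $\overline{\rho(\A)H}$, and unitize via Meyer's theorem if $\A$ lacks a unit), exactly as the paper does at the outset of its proof.
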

\begin{proof}
It is known that every Shilov representation of a semi-Dirichlet operator algebra is semi-Dirichlet \cite[Lemma 4.2]{DKDoc}.

Conversely, suppose $\rho:\A \rightarrow B(H)$ is a completely isometric, semi-Dirichlet representation. If the representation is not cyclic, meaning $\overline{\rho(\A)H} = H$, then compress to the reducing subspace $\overline{\rho(\A)H}$. The compression will still be completely isometric and semi-Dirichlet. So without loss of generality, assume that $\rho$ is cyclic.

By \cite[Theorem 4.6]{DKDoc} and its proof $\rho$ has a unique minimal extremal coextension $\sigma:\A \rightarrow B(K)$
\[
\sigma = \left[\begin{matrix} \rho & 0 \\ *&*\end{matrix}\right]
\]
that is Shilov and cyclic, $K = \overline{\sigma(\A)H}$. Continuing with their argument, let $\pi$ be a maximal dilation of $\sigma$ on $B(L)$ with $K\subseteq L$ invariant and $\pi(a)|_K = \sigma(a)$ for all $a\in\A$. This maximal dilation extends to a $*$-homomorphism of $C^*_e(\A)$, actually a $*$-isomorphism since $\rho$ is completely isometric.

Now, suppose $a_1,a_2\in \A$. Because $\rho$ is semi-Dirichlet there exist $b_n,c_n\in \A$ such that 
\[
\rho(a_2)^*\rho(a_1) = \lim_{n\rightarrow \infty} \rho(b_n) + \rho(c_n)^*
\]
Suppose $q: C^*(\rho(\A)) \rightarrow C^*_e(\A)$ is the canonical quotient $*$-homomorphism such that $q(\rho(a)) = \epsilon(a)$ for all $a\in \A$, or in other words, the unique morphism of C$^*$-covers \\ $(C^*(\rho(\A)), \rho)$ to $(C^*_e(\A), \epsilon)$. This implies that by applying $q$ to the equation above we get that
\[
\epsilon(a_2)^*\epsilon(a_1) = \lim_{n\rightarrow \infty} \epsilon(b_n) + \epsilon(c_n)^*.
\]
Hence, by similar calculations found in \cite[Lemma 4.8]{DKDoc}, for $h_1,h_2\in H$ and omitting the use of $\epsilon$,
\begin{align*}
    \langle \sigma(a_1)h_1, \sigma(a_2)h_2\rangle 
    \ & = \ \langle \pi(a_1)h_1, \pi(a_2)h_2\rangle
\\ & = \ \langle \pi(a_2^*a_1)h_1, h_2\rangle
\\ & = \ \lim_{n\rightarrow \infty} \langle (\pi(b_n) + \pi(c_n)^*)h_1,h_2\rangle
\\ & = \ \lim_{n\rightarrow \infty} \langle (\rho(b_n) + \rho(c_n)^*)h_1, h_2\rangle
\\ & = \ \langle \rho(a_2)^*\rho(a_1)h_1, h_2\rangle
\\ & = \ \langle \rho(a_1)h_1, \rho(a_2)h_2\rangle,
\end{align*}
where the middle step happens since the compression of $\pi(\A + \A^*)$ to $H$ is $\rho(\A) + \rho(\A)^*$.
This implies that 
\[
\sigma = \left[\begin{matrix} \rho & 0 \\ 0 & * \end{matrix}\right].
\]
Therefore, $\sigma = \rho$ since $\sigma$ is minimal. Therefore, $\rho$ is Shilov.
\end{proof}




We say a subset $S$ of a lattice $(L,\preceq)$ is {\bfseries downward-closed} if whenever $a\in L$, $b\in S$ and $a\preceq b$ then $a\in S$.

\begin{theorem}\label{thm:sDLattice}
The semi-Dirichlet C$^*$-covers of an operator algebra $\A$ form a, possibly empty, downward-closed complete sublattice of $\cstarlattice(\A)$. 
\end{theorem}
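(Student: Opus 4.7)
The plan is to verify two things: (i) the set $\mathcal{S}$ of semi-Dirichlet C*-covers is downward-closed in $\cstarlattice(\A)$, and (ii) $\mathcal{S}$ is closed under arbitrary joins. Closure under arbitrary meets is then automatic, since in the complete lattice $\cstarlattice(\A)$ the meet of a nonempty family lies below every member, and downward-closure pulls it back into $\mathcal{S}$. If $\A$ is not semi-Dirichlet then $\mathcal{S} = \varnothing$ by Proposition \ref{prop:DKDirichlet}(ii); otherwise $(C^*_e(\A),\epsilon) \in \mathcal{S}$ tautologically, and the join of the whole set $\mathcal{S}$ provides its top element (via (ii)), so $\mathcal{S}$ is indeed a complete sublattice.

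For (i), suppose $(\C_1,\iota_1) \preceq (\C_2,\iota_2)$ with $(\C_2,\iota_2)$ semi-Dirichlet, and let $\pi : \C_2 \to \C_1$ be the associated morphism of C*-covers, so that $\pi\iota_2 = \iota_1$. Since $\pi$ is a continuous $*$-homomorphism, applying it to the defining inclusion $\iota_2(\A)^*\iota_2(\A) \subseteq \overline{\iota_2(\A)+\iota_2(\A)^*}$ yields $\iota_1(\A)^*\iota_1(\A) \subseteq \overline{\iota_1(\A)+\iota_1(\A)^*}$, so $(\C_1,\iota_1) \in \mathcal{S}$.

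The substantive step is (ii), and the strategy is to leverage Proposition \ref{prop:shilov}, which identifies the completely isometric semi-Dirichlet representations of a semi-Dirichlet algebra with its Shilov representations. Assume $\mathcal{S}$ is nonempty, so that by Proposition \ref{prop:DKDirichlet}(ii) the algebra $\A$ is semi-Dirichlet, and take a family $\{(\C_\lambda,\iota_\lambda)\}_{\lambda \in \Lambda} \subseteq \mathcal{S}$. After faithfully representing each $\C_\lambda \subseteq B(H_\lambda)$, Proposition \ref{prop:shilov} furnishes $*$-homomorphisms $\pi_\lambda : C^*_e(\A) \to B(K_\lambda)$ with invariant subspaces $H_\lambda \subseteq K_\lambda$ for $\pi_\lambda(\A)$ satisfying $\iota_\lambda(a) = \pi_\lambda(a)|_{H_\lambda}$. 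Since each $\pi_\lambda$ is contractive, $c \mapsto \bigoplus_\lambda \pi_\lambda(c)$ is a well-defined $*$-homomorphism $\pi : C^*_e(\A) \to B\bigl(\bigoplus_\lambda K_\lambda\bigr)$. Setting $H := \bigoplus_\lambda H_\lambda$, coordinate-wise invariance gives $\pi(\A)H \subseteq H$, and by construction $\pi(a)|_H = \bigl(\bigoplus_\lambda \iota_\lambda\bigr)(a)$. Thus $\bigoplus_\lambda \iota_\lambda$ is a completely isometric Shilov representation of $\A$, and Proposition \ref{prop:shilov} promotes it back to being semi-Dirichlet. Since this representation is exactly the one defining the join $\bigvee_\lambda[\C_\lambda,\iota_\lambda]$, the join lies in $\mathcal{S}$.

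I do not anticipate any serious obstacle; the entire argument reduces to the observation that the Shilov property is preserved under direct sums, transported through Proposition \ref{prop:shilov}. The only bookkeeping point is that the index set $\Lambda$ may be arbitrary, so one should ensure that the Hilbert space direct sum $\bigoplus_\lambda K_\lambda$ and the associated direct-sum $*$-homomorphism $\pi$ are well-defined, which is guaranteed by the uniform bound $\|\pi_\lambda(c)\| \leq \|c\|$ for $c \in C^*_e(\A)$.
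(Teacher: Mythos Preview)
Your proposal is correct and follows essentially the same approach as the paper: downward-closedness is obtained by pushing the semi-Dirichlet inclusion through the morphism of C*-covers, and closure under joins is proved by invoking Proposition~\ref{prop:shilov} to pass to Shilov representations, taking the direct sum of the associated $*$-representations of $C^*_e(\A)$, and observing that the direct sum of the invariant subspaces is invariant. The only cosmetic difference is that you spell out the bookkeeping about the uniform bound $\|\pi_\lambda(c)\|\le\|c\|$ ensuring the direct-sum $*$-homomorphism is well-defined, which the paper leaves implicit.
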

\begin{proof}
Assume that $\A$ is semi-Dirichlet, otherwise there are no semi-Dirichlet C$^*$-covers. 

Suppose $(\C_1, \iota_1) \preceq (\C_2, \iota_2)$ are in $\cstarlattice(\A)$ with $(\C_2, \iota_2)$ a semi-Dirichlet C$^*$-cover. Let $\pi$ be the unique morphism of C$^*$-covers. Then 
\begin{align*}
\iota_1(\A)^*\iota_1(\A) \ & = \ \pi\iota_2(\A)^*\pi\iota_2(\A)
\\ & = \ \pi(\iota_2(\A)^*\iota_2(\A))
\\ & \subseteq \pi(\overline{\iota_2(\A) + \iota_2(A)^*})
\\ & = \overline{\iota_1(\A) + \iota_1(\A)^*}
\end{align*}
and so $(\C_1, \iota_1)$ is also a semi-Dirichlet C$^*$-cover. This implies closure under arbitrary meets and downward-closedness.

For arbitrary joins, suppose $(\C_\lambda, \iota_\lambda)$ is a semi-Dirichlet C$^*$-cover of $\A$ with $\iota: \A \rightarrow B(H_\lambda)$ for every $\lambda\in\Lambda$. By Proposition \ref{prop:shilov} these are all Shilov representations, meaning there exist Hilbert spaces $K_\lambda$ and $*$-homomorphisms $\pi_\lambda : C^*_e(\A) \rightarrow B(K_\lambda)$ such that $H_\lambda \subseteq K_\lambda$ is an invariant subspace of $\pi_\lambda(\A)$ with
\[
\iota_\lambda(a) = \pi_\lambda(a)|_{H_\lambda} \quad\text{for all } a\in\A.
\]
Define the Hilbert space $K = \bigoplus_{\lambda\in\Lambda} K_\lambda$ and
the $*$-homomorphism
\[
\pi = \bigoplus_{\lambda\in\Lambda} \pi_\lambda: C^*_e(\A) \rightarrow B(K). 
\]
Notice that every $H_\lambda$ is an invariant subspace of $\pi(\A)$, because $K_\lambda$ is a reducing subspace. Thus, 
$H = \bigoplus_{\lambda\in\Lambda} H_\lambda \subseteq K$ is an invariant subspace of $\pi(\A)$ with
\[
\pi(a)|_{H} = \bigoplus_{\lambda\in\Lambda} \pi_\lambda(a)|_{H_\lambda}
= \bigoplus_{\lambda\in\Lambda} \iota_\lambda(a) = \left(\bigvee_{\lambda\in\Lambda} \iota_\lambda\right)(a).
\]
Therefore, the join is a Shilov representation that is completely isometric and so by Proposition \ref{prop:shilov} the join is semi-Dirichlet.
\end{proof}

\begin{definition} From the previous theorem, a semi-Dirichlet operator algebra $\A$ has a maximal semi-Dirichlet C$^*$-cover in $\cstarlattice(\A)$. This will be denoted $(\sDmax(\A), \nu)$.
\end{definition}

The minimal element of this complete sublattice is of course the C$^*$-envelope $(C^*_e(\A), \epsilon)$. Do note that we already have the previous result for Dirichlet operator algebras by Proposition \ref{prop:KRDirichlet}, which says that the complete lattice of Dirichlet C$^*$-covers is just the one point set $\{[C^*_e(\A), \epsilon]\}$.

The sublattice of semi-Dirichlet C$^*$-covers is preserved by completely isometric isomorphism and so as a companion to Lemma \ref{lem:sDisoinvariant} we have the following:

\begin{corollary}
Suppose $\A$ and $\B$ are semi-Dirichlet operator algebras. If $\pi : \A \rightarrow \B$ is a completely isometric isomorphism then there is a $*$-isomorphism $\tilde \pi : \sDmax(\A) \rightarrow \sDmax(\B)$ such that
\[
\tilde\pi(\nu_\A(a)) = \nu_\B(\pi(a))
\]
for all $a\in \A$.
\end{corollary}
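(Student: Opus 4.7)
The plan is to produce $\tilde\pi$ by recognizing $(\sDmax(\B), \nu_\B\circ\pi)$ as a semi-Dirichlet C$^*$-cover of $\A$ and then invoking the maximality built into Theorem \ref{thm:sDLattice}. The analogous construction applied to $\pi^{-1}$ will give a candidate inverse, and a check on a generating set will show the two composites are the identity. So the whole argument is essentially the universal property of $\sDmax$ applied twice.

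First I would verify that $(\sDmax(\B), \nu_\B\circ\pi)$ is a C$^*$-cover of $\A$: the map $\nu_\B\circ\pi$ is the composition of two completely isometric homomorphisms, and since $\pi$ is surjective we have $\nu_\B(\pi(\A)) = \nu_\B(\B)$, which generates $\sDmax(\B)$ as a C$^*$-algebra. The semi-Dirichlet property is then immediate for the same reason:
\begin{align*}
(\nu_\B\circ\pi)(\A)^*(\nu_\B\circ\pi)(\A) & = \nu_\B(\B)^*\nu_\B(\B) \\
& \subseteq \overline{\nu_\B(\B) + \nu_\B(\B)^*} \\
& = \overline{(\nu_\B\circ\pi)(\A) + (\nu_\B\circ\pi)(\A)^*},
\end{align*}
using only that $(\sDmax(\B), \nu_\B)$ is semi-Dirichlet for $\B$ and that $\pi$ is onto. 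Hence $(\sDmax(\B), \nu_\B\circ\pi) \in \cstarlattice(\A)$ lies in the semi-Dirichlet sublattice.

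Now by the maximality of $(\sDmax(\A), \nu_\A)$ within that sublattice, there is a unique $*$-homomorphism $\tilde\pi: \sDmax(\A) \to \sDmax(\B)$ satisfying $\tilde\pi\,\nu_\A(a) = \nu_\B(\pi(a))$ for every $a \in \A$. Applying the identical construction to the completely isometric isomorphism $\pi^{-1}:\B\to\A$ produces a $*$-homomorphism $\sigma: \sDmax(\B) \to \sDmax(\A)$ with $\sigma\,\nu_\B(b) = \nu_\A(\pi^{-1}(b))$ for all $b\in\B$. Then on generators
\[
\sigma\tilde\pi\,\nu_\A(a) = \sigma\,\nu_\B(\pi(a)) = \nu_\A(\pi^{-1}(\pi(a))) = \nu_\A(a),
\]
and symmetrically $\tilde\pi\sigma\,\nu_\B(b) = \nu_\B(b)$. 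Since $\nu_\A(\A)$ generates $\sDmax(\A)$ as a C$^*$-algebra (and likewise for $\B$), the composites $\sigma\tilde\pi$ and $\tilde\pi\sigma$ agree with the identity on dense $*$-subalgebras and are therefore equal to the identity everywhere. Thus $\tilde\pi$ is a $*$-isomorphism.

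There is no real obstacle: the argument is the standard universal-property game, and the only thing to be slightly careful about is the direction of the cover morphism (one must use that maximality of $(\sDmax(\A), \nu_\A)$ means every semi-Dirichlet cover of $\A$ admits a $*$-homomorphism from $\sDmax(\A)$ onto it respecting the embedding). Invertibility of $\pi$ is crucial: it guarantees both that $\nu_\B(\pi(\A))$ generates all of $\sDmax(\B)$ and that one can run the construction symmetrically to produce a two-sided inverse.
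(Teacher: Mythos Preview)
Your proof is correct and is precisely the argument the paper has in mind: the corollary is stated without proof, introduced only by the remark that the semi-Dirichlet sublattice is preserved by completely isometric isomorphism, so your universal-property argument simply fills in the omitted details.
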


While Shilov implies semi-Dirichlet, the converse is not true as seen in the next example. 

\begin{example}
It is shown in 
\cite{Loring} (cf. \cite[Example 2.4]{Blech}) that
\[
\cmax(T_2) = \left\{ f \in M_2(C([0,1])) : f(0) \ \textrm{is diagonal} \right\}
\] where the completely isometric embedding is
\[
\mu\left(\left[\begin{matrix}a & b \\ 0 & c\end{matrix}\right]\right) \ = \ \left[\begin{matrix}a 1 & b\sqrt{x} \\ 0 & c 1\end{matrix}\right].
\]
Note that $(\cmax(T_2),\mu)$ is certainly not a semi-Dirichlet C$^*$-cover:
\[
\left[\begin{matrix}0 & \sqrt{x} \\ 0 &  0\end{matrix}\right]^*\left[\begin{matrix}0 & \sqrt{x} \\ 0 & 0\end{matrix}\right] \ = \ \left[\begin{matrix}0 & 0 \\ 0 & x\end{matrix}\right] \ \notin \ \mu(T_2) + \mu(T_2)^*,
\]
where the failure occurs in the (2,2)-entry.
Intriguingly, $\mu$ is a direct sum of semi-Dirichlet representations:
\[
\left[\begin{matrix}a & b \\ 0 & c\end{matrix}\right] \ \mapsto \ 
\bigoplus_{\lambda\in [0,1]} \left[\begin{matrix}a & b\sqrt{\lambda} \\ 0 & c\end{matrix}\right].
\]
This shows the necessity of staying within the completely isometric semi-Dirichlet representations. 

It will be established by the following proposition that $(\sDmax(T_2), \nu)$ is given by $\sDmax(T_2) = \bbC \oplus M_2$ and 
\[
\nu \left(\left[ \begin{matrix} a & b \\ 0 & c\end{matrix}
\right]\right) \ = \ \left[
\begin{matrix} a \\ & a & b \\ & 0 & c \end{matrix}\right].
\]
\end{example}


In the case of the tensor algebra of a C$^*$-correspondence the maximal semi-Dirichlet C$^*$-cover is well-understood.

\begin{theorem}\label{thm:sDmaxisToeplitz}
If $(X,\C)$ is a C$^*$-correspondence then $\sDmax(\T_{(X,\C)}^+) \simeq \T_X$ as C$^*$-covers of $\T^+_{(X,\C)}$.
\end{theorem}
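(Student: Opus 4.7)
The plan is in two steps: first, show that $(\T_X, \iota_{\T_X})$, where $\iota_{\T_X}$ denotes the Fock-space embedding of $\T^+_{(X,\C)}$ into $\T_X$, is itself a semi-Dirichlet C$^*$-cover; then, show that it dominates every semi-Dirichlet C$^*$-cover of $\T^+_{(X,\C)}$ in $\cstarlattice(\T^+_{(X,\C)})$. Together these identify $(\T_X, \iota_{\T_X})$ as the maximum.

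For the first step, I would observe that Proposition \ref{prop:integratedform} already identifies $\T^+_{(X,\C)}$ completely isometrically inside $\T_X$, so it remains only to verify the semi-Dirichlet property. The argument is essentially the one in Proposition \ref{prop:tensorsD}: because the Fock representation is isometric, $t_\infty(x)^* t_\infty(y) = \rho_\infty(\langle x, y\rangle) \in \rho_\infty(\C)$, and the bimodule structure of $X$ over $\C$ propagates this through higher products to give $\iota_{\T_X}(\T^+_{(X,\C)})^* \iota_{\T_X}(\T^+_{(X,\C)}) \subseteq \overline{\iota_{\T_X}(\T^+_{(X,\C)}) + \iota_{\T_X}(\T^+_{(X,\C)})^*}$.

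For the second step, I would take an arbitrary semi-Dirichlet C$^*$-cover $(\C', \iota')$. By Proposition \ref{prop:shilov}, $\iota'$ is Shilov, so there exist a $*$-homomorphism $\pi: C^*_e(\T^+_{(X,\C)}) \to B(K)$ and an invariant subspace $H \subseteq K$ for $\pi(\epsilon(\T^+_{(X,\C)}))$ with $\iota'(a) = \pi(\epsilon(a))|_H$ for every $a \in \T^+_{(X,\C)}$. Because $(C^*_e(\T^+_{(X,\C)}), \epsilon)$ is the minimum of $\cstarlattice$, and $(\T_X, \iota_{\T_X})$ is a C$^*$-cover by the first step, there is a canonical $*$-homomorphism $q: \T_X \to C^*_e(\T^+_{(X,\C)})$ with $q \circ \iota_{\T_X} = \epsilon$. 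The composite $\Pi := \pi \circ q$ is then a $*$-representation of $\T_X$ on $K$ that inherits the isometric Fock relations $\Pi(t_\infty(x))^* \Pi(t_\infty(y)) = \Pi(\rho_\infty(\langle x, y\rangle))$. Setting $\rho(c) := \iota'(\rho_\infty(c))$ and $t(x) := \iota'(t_\infty(x))$, one uses that $\rho_\infty(\C)$ is a self-adjoint subalgebra of $\T^+_{(X,\C)}$, so the invariance of $H$ under $\Pi(\rho_\infty(\C))$ is automatically reducing; combined with invariance of $H$ under $\Pi(t_\infty(X))$, the isometric identity above compresses to $t(x)^* t(y) = \rho(\langle x, y\rangle)$. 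Hence $(\rho, t)$ is an isometric representation of $(X, \C)$ on $H$, so Proposition \ref{prop:integratedform} produces a $*$-homomorphism $\rho \rtimes t: \T_X \to C^*(\iota'(\T^+_{(X,\C)})) = \C'$ which by construction satisfies $(\rho \rtimes t) \circ \iota_{\T_X} = \iota'$ on $\T^+_{(X,\C)}$. This gives the domination $(\C', \iota') \preceq (\T_X, \iota_{\T_X})$ in the lattice.

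The main obstacle is the compression step in the second part, where the isometric identity living on $B(K)$ must be shown to survive compression down to the invariant subspace $H$. The $*$-operation does not commute with compression to a merely invariant subspace, so the argument must leverage the asymmetry between the two sides of the correspondence: the self-adjoint coefficient algebra $\rho_\infty(\C)$ forces the reducing property on one factor, while the module $t_\infty(X)$ only contributes invariance on the other, and these two conditions together are exactly what is needed to kill off the would-be obstructing projections in the block computation. Once the compressed representation is seen to be isometric, the universal property of $\T_X$ closes the argument routinely.
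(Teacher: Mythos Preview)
Your two-step strategy matches the paper's exactly: show $(\T_X,\iota_{\T_X})$ is a semi-Dirichlet cover, then show every semi-Dirichlet cover arises from an isometric representation of $(X,\C)$ and is therefore dominated by $\T_X$ via its universal property. Step~1 is identical to the paper's.

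For step~2 the paper takes a slightly different technical route. Rather than using the invariant-subspace description of a Shilov representation and compressing the relation $\Pi(t_\infty(x))^*\Pi(t_\infty(y))=\Pi(\rho_\infty(\langle x,y\rangle))$ directly, the paper writes the given semi-Dirichlet representation as an integrated form $\rho\rtimes t$ of a completely contractive pair, invokes the Muhly--Solel coextension to an isometric pair $(\rho',t')$, and then uses that Shilov means \emph{extremal coextension}, forcing $\rho'\rtimes t'=(\rho\rtimes t)\oplus\pi'$; compressing the isometric identity back to the original summand yields $t(x)^*t(y)=\rho(\langle x,y\rangle)$. Your compression argument is more elementary---it avoids appealing to the Muhly--Solel dilation theorem and works purely from block-matrix bookkeeping (invariance of $H$ under $\Pi(t_\infty(y))$ kills one projection, and the adjoint of invariance under $\Pi(t_\infty(x))$ kills the other; the reducing property for $\rho_\infty(\C)$ you mention is not even needed for this step). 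The paper's route has the advantage of tying the result explicitly into the established dilation theory of C$^*$-correspondences, which resonates with the paper's broader theme that semi-Dirichlet covers correspond to extremal coextensions.
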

\begin{proof}
In the same way as the proof of Proposition \ref{prop:tensorsD} we see that because $t_\infty(x)^*t_\infty(y) = \rho_\infty(\langle x,y\rangle)$ for all $x,y\in X$ then $\T_{(X,\C)}^+$ has the semi-Dirichlet property as a subalgebra of $\T_{(X,\C)}$. In fact, this is true for every isometric representation of $(X,\C)$.

Conversely, if $\pi$ is a completely isometric semi-Dirichlet representation of $\T_{(X,\C)}^+$, then since it is completely contractive it is the integrated form of a completely contractive representation of $(X,\C)$, that is $\pi = \rho\rtimes t$ by Proposition \ref{prop:integratedform}. By Muhly and Solel \cite{MS1} this coextends to an isometric representation $(\rho', t', H')$ where $\rho'\rtimes t'$ coextends $\rho\rtimes t$. However, by Proposition \ref{prop:shilov} $\pi = \rho\rtimes t$ is Shilov which is the same thing as an extremal coextension for a semi-Dirichlet algebra. Hence, $\rho'\rtimes t' = (\rho\rtimes t) \oplus \pi'$ which implies that 
\[
t(x)^*t(y) = P_H t'(x)^*t'(y)P_H = P_H \sigma'(\langle x,y\rangle)P_H 
= \sigma(\langle x,y\rangle).
\]
Thus, $\pi$ arises from an isometric representation of $(X,\C)$. Therefore, by the universal property of $\T_{(X,\C)}$ we have $C^*(\pi(\T_{(X,\C)}^+)) \preceq \T_{(X,\C)}$ as C$^*$-covers.
\end{proof}

This result can be used to show that the lattice of semi-Dirichlet covers being one point does not imply the operator algebra is Dirichlet.

\begin{example}
Consider the graph with one vertex and a countable infinite number of loops. If the graph correspondence is $(X,\C)$ then $\T_{(X,\C)}^+$ is sometimes denoted $\A_\infty$, the infinite version of the non-commutative disc algebra. By Example \ref{ex:sD} part (viii) this operator algebra is semi-Dirichlet but not Dirichlet. However, it is known that 
\[
\sDmax(\T_{(X,\C)}^+) \simeq \T_{(X,\C)} \simeq \O_\infty \simeq C^*_e(\T_{(X,\C)}^+),
\]
via isomorphisms of C$^*$-covers.
\end{example}

This does raise the follow question:

\begin{question}
When is the lattice of semi-Dirichlet C*-covers a singleton? As mentioned, Dirichlet algebras and the previous example fall into this situation but is a general description such operator algebras?
\end{question}

Complementary to this is the opposite situation:

\begin{question}
When is the lattice of semi-Dirichlet C*-covers the whole lattice of C*-covers? Or equivalently, when is the maximal semi-Dirichlet C*-cover equal to the maximal C*-cover?
\end{question}

Apart from the trivial case of when $\A$ is a C*-algebra the authors are unaware of any examples satisfying the previous question.



\section{Coextending semi-Dirichlet dynamics}

 A {\bfseries dynamical system} $(\A, G, \alpha)$ is made up of an approximately unital operator algebra $\A$, a locally compact Hausdorff group $G$, and a representation $\alpha : G \rightarrow \Aut(\A)$ into the completely isometric automorphisms of $\A$ that is continuous in the point-norm topology. Note that if $\A$ is a C$^*$-algebra then $\alpha$ maps into the $*$-automorphisms of $\A$. The theory of these systems and their related crossed product operator algebras is thoroughly studied in \cite{KatRamMem} and the interested reader should consult that monograph for further reading. 

 For a given dynamical system $(\A, G, \alpha)$ a C$^*$-cover $(\C, \iota)$ is called {\bfseries $\alpha$-admissible} if there is a C$^*$-dynamical system $(\C, G, \beta)$ such that
 \[
 \beta_g(\iota(a)) = \iota(\alpha_g(a))
 \]
 for all $g\in G, a\in\A$. This $\beta$ is uniquely defined \cite[Lemma 2.2]{KatRamHN} and so is often referred to as $\alpha$. The $\alpha$-admissible C$^*$-covers for $(\A, G,\alpha)$ form a complete sublattice of $\cstarlattice(\A)$ containing $C^*_e(\A)$ and $\cmax(\A)$ \cite{Hamidi} \cite[Lemma 3.4]{KatRamMem}. However, not every C$^*$-cover is $\alpha$-admissible \cite{KatRamHN, Hamidi}.

\begin{proposition}\label{prop:sDmaxAdmissible}
If $(\A, G,\alpha)$ is a dynamical system, then $(\sDmax(\A),\nu)$ is an $\alpha$-admissible C$^*$-cover.
\end{proposition}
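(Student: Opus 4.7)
The plan is to exploit the corollary immediately following Theorem~\ref{thm:sDLattice}: each completely isometric automorphism of a semi-Dirichlet operator algebra lifts to a $*$-automorphism of its maximal semi-Dirichlet C$^*$-cover, intertwined with $\nu$. Applying this with $\alpha_g \colon \A \to \A$ for each $g \in G$ produces a family of $*$-automorphisms $\widetilde{\alpha}_g$ of $\sDmax(\A)$; I would then verify that these assemble into a point-norm continuous group action.

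The first step is to observe that the lifting $\widetilde{\alpha}_g$ is the unique $*$-homomorphism $\sDmax(\A) \to \sDmax(\A)$ satisfying $\widetilde{\alpha}_g \circ \nu = \nu \circ \alpha_g$, since $\nu(\A)$ generates $\sDmax(\A)$ as a C$^*$-algebra. Both $\widetilde{\alpha}_{gh}$ and $\widetilde{\alpha}_g \widetilde{\alpha}_h$ are $*$-automorphisms implementing the intertwining with $\alpha_{gh}$, so uniqueness forces $\widetilde{\alpha}_{gh} = \widetilde{\alpha}_g \widetilde{\alpha}_h$; likewise $\widetilde{\alpha}_e = \id$. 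Hence $g \mapsto \widetilde{\alpha}_g$ is a group homomorphism into $\Aut(\sDmax(\A))$.

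The main technical step, and the only mild obstacle, is point-norm continuity. For each $a \in \A$ the map $g \mapsto \widetilde{\alpha}_g(\nu(a)) = \nu(\alpha_g(a))$ is continuous, since $\alpha$ is point-norm continuous and $\nu$ is isometric. Continuity then passes to $\nu(\A)^*$ via the involution and, by linearity and multiplicativity, to the $*$-subalgebra $\A_0$ generated by $\nu(\A)$, which is dense in $\sDmax(\A)$. A standard $\varepsilon/3$ approximation, exploiting that each $\widetilde{\alpha}_g$ is an isometry, upgrades this to continuity on all of $\sDmax(\A)$. Setting $\beta := \widetilde{\alpha}$ then yields a C$^*$-dynamical system $(\sDmax(\A), G, \beta)$ with $\beta_g \circ \nu = \nu \circ \alpha_g$, which is exactly the $\alpha$-admissibility condition.
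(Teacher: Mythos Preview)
Your proposal is correct and follows essentially the same route as the paper. The paper re-derives by hand the special case of the Corollary you invoke (showing that $\nu\gamma$ is again a completely isometric semi-Dirichlet representation and using maximality to produce the lift and its inverse), and then defers the group-homomorphism and point-norm continuity verifications to the proof of \cite[Lemma~3.4]{KatRamMem}; you instead cite the Corollary directly and spell out those two verifications, which is a cosmetic rather than substantive difference.
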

\begin{proof}
The proof largely follows the argument of \cite[Lemma 3.4]{KatRamMem}. 
First, let $\gamma \in \Aut(\A)$. Then $(C^*(\nu\gamma(\A)), \nu\gamma)$ is a C$^*$-cover. It is easy to see that $\sDmax(\A) = C^*(\nu(\A)) = C^*(\nu\gamma(\A))$ but we need to prove that the two covers are equivalent.
Now 
\begin{align*}
\nu\gamma(\A)^*\nu\gamma(\A)  & \ = \ \nu(\A)^*\nu(\A)
\\ & \ \subseteq \ \overline{\nu(\A) + \nu(\A)^*}
\\ & \ = \ \overline{\nu\gamma(\A) + \nu\gamma(\A)^*}
\end{align*}
since $\nu$ is a semi-Dirichlet representation.
Thus, by the maximal (or universal) property
\[
(C^*(\nu\gamma(\A)), \nu\gamma) \ \preceq \ (\sDmax(\A),\nu)
\]
and so there exists a morphism of C$^*$-covers, that is a surjective $*$-homomorphism $\varphi : \sDmax(\A) \rightarrow C^*(\nu\gamma(\A))$ such that 
$\varphi\nu = \nu\gamma$. Do the same thing for $\gamma^{-1}$, finding $\varphi': \sDmax(\A) \rightarrow C^*(\nu\gamma^{-1}(\A))$ such that 
$\varphi'\nu = \nu\gamma^{-1}$.
Thus, 
\[
\varphi\varphi'\nu = \varphi\nu\gamma^{-1} = \nu\gamma\gamma^{-1} = \nu
\]
and $\varphi'\varphi\nu = \nu$ similarly.
This  implies that $\varphi\varphi'|_{\nu(\A)} = \id|_{\nu(\A)} = \varphi'\varphi|_{\nu(\A)}$ and so $\varphi\varphi' = \varphi'\varphi = \id$ since $\nu(\A)$ generates $\sDmax(\A)$. Therefore, $\varphi \in \Aut(\sDmax(\A))$ with $\varphi\nu = \nu\gamma$.

Lastly, from above and following in the same way as the proof of \cite[Lemma 3.4]{KatRamMem} the group representation
\[
G \ni g \mapsto \nu\alpha_g\nu^{-1}|_{\nu(\A)} \in \Aut(\nu(\A))
\]
extends uniquely to a group representation $\alpha: G\rightarrow \sDmax(\A)$.
\end{proof}

A {\bfseries covariant representation} $(\rho, u, H)$ of $(\A, G,\alpha)$ is a strongly continuous unitary representation $u: G \rightarrow B(H)$ and a non-degenerate, completely contractive representation $\rho: \A \rightarrow B(H)$ satisfying the covariance relation
\[
u(g)\rho(a) = \rho(\alpha_g(a))u(g) 
\]
for all $g\in G$ and $a\in \A$.
Another covariant representation $(\sigma, v, K)$ of $(\A,G,\alpha)$ with $H\subset K$ is said to {\bfseries extend} $(\rho, u, H)$ if
\[
\sigma = \left[\begin{matrix}\rho & * \\ 0 & *\end{matrix}\right] \quad \textrm{and} \quad v = \left[\begin{matrix}u & 0 \\ 0 & *\end{matrix}\right],
\]
{\bfseries coextend} $(\rho, u, H)$ if
\[
\sigma = \left[\begin{matrix}\rho & 0 \\ * & *\end{matrix}\right] \quad \textrm{and} \quad v = \left[\begin{matrix}u & 0 \\ 0 & *\end{matrix}\right],
\]
and {\bfseries dilate} $(\rho, u, H)$ if
\[
\sigma = \left[\begin{matrix}* & * & * \\ 0& \rho & * \\ 0 & 0& *\end{matrix}\right] \quad \textrm{and} \quad v = \left[\begin{matrix}* & 0 & 0 \\ 0 & u & 0 \\ 0 & 0 & *\end{matrix}\right].
\]

In \cite[Theorem 4.6]{KatRamHN} it was proven that, for a C$^*$-correspondence $(X,\C)$, every covariant representation of $(\T_{(X,\C)}^+, G,\alpha)$  coextends to a covariant representation $(\sigma,v, K)$ where $\sigma$ is the integrated form of an isometric representation  of the correspondence. In light of Theorem \ref{thm:sDmaxisToeplitz} it turns out that this result is not specific to tensor algebras of C$^*$-correspondences but is true for dynamical systems of semi-Dirichlet operator algebras.

\begin{theorem}\label{thm:dilatestosD}
Let $(\A, G, \alpha)$ be a dynamical system where $\A$ is semi-Dirichlet. Every covariant representation $(\rho, u, H)$ coextends to a covariant representation $(\sigma, v, K)$ where $\sigma$ is semi-Dirichlet.
\end{theorem}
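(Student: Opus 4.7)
The plan is to construct the coextension by first coextending $\rho$ in a non-covariant way using the general theory from \cite[Theorem 4.6]{DKDoc} (as invoked in the proof of Proposition~\ref{prop:shilov}), and then exhibiting the unitary group action on the coextension directly, with the semi-Dirichlet identity as the main technical tool. After the standard reduction (replacing $H$ by the reducing subspace $\overline{\rho(\A)H}$, which is $u$-invariant because $u_g\rho(a)H = \rho(\alpha_g(a))u_gH \subseteq \rho(\A)H$), I may assume $\rho$ is cyclic. Let $\sigma \colon \A \to B(K)$ be the unique minimal extremal coextension of $\rho$ from \cite[Theorem 4.6]{DKDoc}, with $K = \overline{\sigma(\A)H}$; this is a Shilov representation, and hence semi-Dirichlet by \cite[Lemma 4.2]{DKDoc}.

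Next I would define the candidate unitary $v_g$ on the dense subspace $\sigma(\A)H \subseteq K$ by the natural formula
\[
v_g\Bigl(\sum_i \sigma(a_i)h_i\Bigr) \ := \ \sum_i \sigma(\alpha_g(a_i))\,u_g h_i.
\]
The crucial and most delicate step is to check that $v_g$ is well-defined and isometric. Expanding $\|\sum_i \sigma(a_i)h_i\|^2 = \sum_{i,j}\langle \sigma(a_j)^*\sigma(a_i)h_i,h_j\rangle$ and using the semi-Dirichlet property of $\sigma$ to write $\sigma(a_j)^*\sigma(a_i) = \lim_n (\sigma(b_n)+\sigma(c_n)^*)$, then applying the coextension property $P_H\sigma(\cdot)|_H = \rho(\cdot)$ and $P_H\sigma(\cdot)^*|_H = \rho(\cdot)^*$, the inner product collapses to $\lim_n \langle(\rho(b_n)+\rho(c_n)^*)h_i,h_j\rangle$ in $H$. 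The parallel computation on the $v_g$-side uses the relations $\sigma(\alpha_g(a_j))^*\sigma(\alpha_g(a_i)) = \lim_n(\sigma(\alpha_g(b_n))+\sigma(\alpha_g(c_n))^*)$ together with the covariance $\rho(\alpha_g(\cdot)) = u_g\rho(\cdot)u_g^*$ and the unitarity of $u_g$ to produce the same limit. This is the step where the semi-Dirichlet hypothesis does the real work, and it is where I expect the argument to be most delicate.

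Given isometry on a dense subspace, $v_g$ extends to an isometry $v_g \colon K \to K$. The symmetric construction with $g^{-1}$ and $u_g^*$ inverts it, so $v_g$ is unitary. The group homomorphism property $v_{gh} = v_g v_h$ is immediate from the formula, and the covariance $v_g\sigma(a) = \sigma(\alpha_g(a))v_g$ is as well. Strong continuity in $g$ follows on the dense subspace $\sigma(\A)H$ from the point-norm continuity of $\alpha$ and the strong continuity of $u$, and then uniformly from the norm bound $\|v_g\|=1$. To identify $v_g|_H = u_g$, I would pick an approximate unit $e_\lambda$ for $\A$ and use that $\sigma$ is non-degenerate on $H$ (a standard calculation using that $\|\sigma(e_\lambda)h\|$ is squeezed between $\|\rho(e_\lambda)h\|$ and $\|h\|$), so $\sigma(e_\lambda)h \to h$ and $v_g h = \lim\sigma(\alpha_g(e_\lambda))u_gh = u_g h$.

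Putting everything together, $(\sigma, v, K)$ is a covariant coextension of $(\rho, u, H)$, and $\sigma$ is semi-Dirichlet as noted above. The main obstacle, as indicated, is the isometry calculation for $v_g$: this is where the semi-Dirichlet identity is used in an essential way to transport computations from $K$ down to $H$, where the unitary $u_g$ is available to absorb the group action. Without the semi-Dirichlet hypothesis one cannot reduce those moment computations to $H$, so this mechanism is exactly what makes the semi-Dirichlet setting work.
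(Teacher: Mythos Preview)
Your outline follows the paper's proof almost exactly: take the unique minimal extremal (hence Shilov) coextension $\sigma$ of $\rho$ on $K=\overline{\sigma(\A)H}$, define $v_g$ on $\sigma(\A)H$ by the covariance formula, verify it is a well-defined unitary, and check the remaining properties. The reduction to cyclic $\rho$, the group law, covariance, strong continuity, and your approximate-unit argument for $v_g|_H=u_g$ are all fine (the paper does the last point slightly differently, showing $P_Hv(g)P_H=u(g)$ and using unitarity, but your version works too).

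There is one genuine gap, precisely at the point you flag as ``most delicate.'' From the semi-Dirichlet property of $\sigma$ you write $\sigma(a_j)^*\sigma(a_i)=\lim_n(\sigma(b_n)+\sigma(c_n)^*)$, and then for the $v_g$-side you assert $\sigma(\alpha_g(a_j))^*\sigma(\alpha_g(a_i))=\lim_n(\sigma(\alpha_g(b_n))+\sigma(\alpha_g(c_n))^*)$ with the \emph{same} $b_n,c_n$. This second identity does not follow from the first: it would require that $\sigma(a)\mapsto\sigma(\alpha_g(a))$ extend to a $*$-automorphism of $C^*(\sigma(\A))$, i.e.\ that this cover be $\alpha$-admissible, and there is no reason it should be (indeed $\sigma$ need not even be completely isometric). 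The paper avoids this by choosing the approximants at the level of the C$^*$-envelope, where $\alpha_g$ \emph{does} extend to a $*$-automorphism by \cite[Lemma~3.4]{KatRamMem}: one takes $a_j^*a_i=\lim_n(b_n+c_n^*)$ in $C^*_e(\A)$, applies $\alpha_g$ there, and then computes all inner products through the maximal dilation $\pi$ of $\sigma$, which is a $*$-homomorphism of $C^*_e(\A)$. Because $K$ is $\pi(\A)$-invariant and $H\subseteq K$, one gets $\langle\sigma(\alpha_g(a_i))u_gh_i,\sigma(\alpha_g(a_j))u_gh_j\rangle=\langle\pi(\alpha_g(a_j^*a_i))u_gh_i,u_gh_j\rangle$, and the limit can now legitimately be pushed through $\pi$ and compressed to $H$. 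Once you route the calculation through $\pi$ and $C^*_e(\A)$ in this way, your argument goes through; without it, the crucial matching of the two sides is unjustified.
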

\begin{proof}
Much of this follows similarly to the proof of Proposition \ref{prop:shilov}. By \cite[Theorem 4.6]{DKDoc} $\rho$ has a unique cyclic extremal coextension $\sigma : \A \rightarrow B(K)$ which in turn has a maximal dilation (extension) $\pi$ on $B(L)$. Thus $\pi$ extends to a $*$-homomorphism of $C^*_e(\A)$. Thus, $\sigma$ is a Shilov representation of $\A$ and hence semi-Dirichlet by Proposition \ref{prop:shilov}.

It is an important point to recall that $\alpha$ extends uniquely to a continuous representation of $G$ on $\Aut(C^*_e(\A))$ \cite[Lemma 3.4]{KatRamMem}, which we will call $\alpha$ as well.

Since $\sigma$ is cyclic, $K = \overline{\sigma(\A)H}$, we can show for every $g\in G$ that $v(g)\in B(K)$ given by
\[
v(g)\sigma(a)h = \sigma(\alpha_g(a))u(g)h, 
\]
for all $a\in \A$ and $h\in H$, is a well-defined unitary.
To this end, for $a_1,a_2\in \A$ we know by the semi-Dirichlet property that there are $b_n,c_n\in \A$ such that 
\[
a_2^*a_1 = \lim_{n\rightarrow \infty} b_n + c_n^*.
\]
Hence, for $h_1,h_2\in H$
\begin{align*}
\Big\langle \sigma(\alpha_g(a_1))u(g)h_1, &\sigma(\alpha_g(a_2))u(g)h_2\Big\rangle
\\ & \quad = \ \ \Big\langle \pi(\alpha_g(a_1))u(g)h_1, \pi(\alpha_g(a_2))u(g)h_2\Big\rangle
\\ & \quad = \ \ \Big\langle \pi(\alpha_g(a_2^*a_1))u(g)h_1, u(g)h_2\Big\rangle
\\ & \quad = \ \ \lim_{n\rightarrow\infty} \Big\langle \pi(\alpha_g(b_n + c_n^*))u(g)h_1, u(g)h_2\Big\rangle
\\ & \quad = \ \ \lim_{n\rightarrow\infty} \Big\langle (\pi(\alpha_g(b_n)) + \pi(\alpha_g(c_n))^*)u(g)h_1, u(g)h_2\Big\rangle
\\ & \quad = \ \ \lim_{n\rightarrow \infty} \Big\langle (\rho(\alpha_g(b_n)) + \rho(\alpha_g(c_n))^*)u(g)h_1, u(g)h_2\Big\rangle
\\ & \quad = \ \ \lim_{n\rightarrow\infty} \Big\langle u(g)(\rho(b_n) + \rho(c_n)^*)h_1, u(g)h_2\Big\rangle
\\ & \quad = \ \ \lim_{n\rightarrow\infty} \Big\langle (\rho(b_n) + \rho(c_n)^*)h_1, h_2\Big\rangle
\\ & \quad \; \vdots
\\ & \quad = \ \ \Big\langle \sigma(a_1)h_1, \sigma(a_2)h_2\Big\rangle.
\end{align*}
Note that in the third last step we are using that since $u(g)\rho(a) = \rho(\alpha_g(a))u(g)$ then $u(g)\rho(a)^* = \rho(\alpha_g(a))^*u(g)$ by taking adjoints and moving the unitaries.
All of this implies that $v(g)$ is a well-defined isometry, in fact a unitary. 

Now for every $a\in \A$ and $h\in H$
\begin{align*}
P_H v(g)\sigma(a)h \ \ & = \ \ P_H \sigma(\alpha_g(a))u(g)h
\\ & = \ \ P_H \rho(\alpha_g(a))u(g)h
\\ & = \ \ P_H u(g) \rho(a)h.
\end{align*}
Thus, $P_H v(g) P_H = u(g)$ which means that there exists a unitary $\tilde u(g) \in B(K \ominus H)$ such that $v(g) = u(g) \oplus \tilde u(g)$, that is, $v(g)$ coextends $u(g)$.

As well, for $g_1,g_2\in G$ we have for every $a\in\A$ and $h\in H$
\begin{align*}
v(g_1g_2)\sigma(a)h \ \ & = \ \ \sigma(\alpha_{g_1g_2}(a))u(g_1g_2)h
\\ & = \ \ \sigma(\alpha_{g_1}(\alpha_{g_2}(a)))u(g_1)u(g_2)h
\\ & = \ \ v(g_1)v(g_2)\sigma(a)h.
\end{align*}
Hence, $v(g_1g_2) = v(g_1)v(g_2)$ and $v$ is a unitary representation of $G$. Strong continuity follows from the complete contractivity of $\sigma$, the point-norm continuity of $\alpha$ and the strong continuity of $u$.

Lastly, we should check the covariance relations. To this end, let $a,b\in\A$, $h\in H$ and $g\in G$
\begin{align*}
v(g)\sigma(a)\sigma(b)h \ \ & = \ \ u(g)\sigma(ab)h
\\ & = \ \ \sigma(\alpha_g(ab))u(g)h
\\ & = \ \ \sigma(\alpha_g(a))\sigma(\alpha_g(b))u(g)h
\\ & = \ \ \sigma(\alpha_g(a))v(g)\sigma(b)h.
\end{align*}
By the cyclicity of $K$ we then have $v(g)\sigma(a) = \sigma(\alpha_g(a))v(g)$.

Therefore, $(\sigma, v, K)$ is a covariant representation of $(\A, G, \alpha)$ coextending $(\rho, u, H)$ where $\sigma$ is semi-Dirichlet.

\end{proof}

We now turn to semi-Dirichlet property and crossed product operator algebras.
For a C$^*$-dynamical system $(\C, G, \alpha)$ recall that the continuous compactly-supported functions from $G$ into $\C$ is denoted $C_c(G,\C)$ and is a $*$-algebra under the convolution product and a compatible involution. The selfadjoint crossed products are completions of this $*$-algebra under the integrated forms of various covariant representations of the systems. There is a lot of theory here and the reader will have to consult \cite[Chapter 2]{Williams} for the necessary crossed product background.

\begin{definition}[\cite{KatRamMem}]
Suppose $(\A,G,\alpha)$ is a dynamical system and $(\C,\iota)$ is an $\alpha$-admissible C$^*$-cover. The  {\bfseries relative full crossed product} is defined to be
\[
\A \rtimes_{(\C,\iota),\alpha} G := \overline{C_c(G,\iota(\A))} \subseteq \C \rtimes_\alpha G,
\]
and the {\bfseries full crossed product} is
\[
\A \rtimes_\alpha G := \A \rtimes_{(\cmax(\A),\mu),\alpha} G.
\]
\end{definition}

In the same manner, one can define the relative reduced crossed product by taking the closure of $C_c(G,\iota(\A))$ in the reduced crossed product C*-algebra $\C \rtimes^r_\alpha G$. It is proven in \cite[Theorem 3.12]{KatRamMem} that all of these algebras are completely isometrically isomorphic and so we refer to this algebras as the {\bfseries reduced crossed product} and denote it $\A \rtimes^r_\alpha G$. As well, if $G$ is amenable then the full and reduced crossed products are completely isometrically isomorphic \cite[Theorem 3.14]{KatRamMem}.

\begin{proposition}
Let $(\A, G, \alpha)$ be a dynamical system and suppose $(\C_1,\iota_1)$ and $(\C_2, \iota_2)$ are $\alpha$-admissible C$^*$-covers. If $(\C_1,\iota_1) \preceq (\C_2, \iota_2)$ via a morphism $\varphi$, then there exists a completely contractive surjective homomorphism
\[
q_\varphi : \A \rtimes_{(\C_2, \iota_2),\alpha} G \rightarrow \A \rtimes_{(\C_1, \iota_1),\alpha} G
\]
such that $q_\varphi(f) = \varphi f \in C_c(G,\iota_1(\A))$ for all $f\in C_c(G,\iota_2(\A))$ (aka the identity map). Moreover, equivalence of C$^*$-covers implies that $q_\varphi$ is a  completely isometric isomorphism. 
\end{proposition}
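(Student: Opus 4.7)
The plan is to construct $q_\varphi$ as the restriction of the standard functorial surjection $\tilde\varphi : \C_2 \rtimes_\alpha G \to \C_1 \rtimes_\alpha G$ between the ambient C*-algebraic crossed products, and then read off the moreover statement from the fact that under equivalence $\varphi$ is a $*$-isomorphism.

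First I would check that the covering morphism $\varphi : \C_2 \to \C_1$ is $G$-equivariant for the actions provided by $\alpha$-admissibility, that is, $\varphi\alpha_g = \alpha_g\varphi$ on $\C_2$ for every $g\in G$. Both sides are $*$-homomorphisms $\C_2 \to \C_1$, and on the generating set $\iota_2(\A)$ the admissibility relations give
\[
\varphi(\alpha_g(\iota_2(a))) = \varphi(\iota_2(\alpha_g(a))) = \iota_1(\alpha_g(a)) = \alpha_g(\iota_1(a)) = \alpha_g(\varphi(\iota_2(a))),
\]
so the two maps agree everywhere on $\C_2$.

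Next I would invoke the standard functoriality of the full C*-algebraic crossed product --- equivalently, apply the universal property of $\C_2 \rtimes_\alpha G$ to the covariant pair obtained from the canonical covariant pair of $\C_1 \rtimes_\alpha G$ postcomposed with $\varphi$ --- to produce a surjective $*$-homomorphism $\tilde\varphi : \C_2 \rtimes_\alpha G \to \C_1 \rtimes_\alpha G$ whose restriction to the dense $*$-subalgebra $C_c(G,\C_2)$ is the pointwise application $f \mapsto \varphi \circ f$. I would then observe that $\tilde\varphi$ carries $C_c(G,\iota_2(\A))$ onto $C_c(G,\iota_1(\A))$: containment is immediate, and for the reverse inclusion, given $g \in C_c(G,\iota_1(\A))$, the function $s \mapsto \iota_2(\iota_1^{-1}(g(s)))$ lies in $C_c(G,\iota_2(\A))$ because $\iota_1^{-1} : \iota_1(\A) \to \A$ and $\iota_2 : \A \to \iota_2(\A)$ are completely isometric isomorphisms, and it is mapped to $g$ by $\tilde\varphi$. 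Defining $q_\varphi$ as the restriction of $\tilde\varphi$ to $\A\rtimes_{(\C_2,\iota_2),\alpha} G = \overline{C_c(G,\iota_2(\A))}$ yields a completely contractive homomorphism into $\A\rtimes_{(\C_1,\iota_1),\alpha} G$ with dense, and hence (being closed) surjective, image, and it satisfies $q_\varphi(f) = \varphi \circ f$ on $C_c(G,\iota_2(\A))$ as required.

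For the moreover statement, if $(\C_1,\iota_1)\sim(\C_2,\iota_2)$ then $\varphi$ is a $*$-isomorphism, so $\tilde\varphi$ is a $*$-isomorphism of C*-algebras and hence completely isometric. Its restriction $q_\varphi$ is therefore a completely isometric homomorphism; applying the same construction to $\varphi^{-1}$ yields a two-sided inverse $q_{\varphi^{-1}}$, so $q_\varphi$ is a completely isometric isomorphism. The only real obstacle in this argument is verifying the functoriality input producing $\tilde\varphi$ --- this is classical but deserves to be cited carefully --- after which everything reduces to continuity and density on the cores $C_c(G,\iota_i(\A))$.
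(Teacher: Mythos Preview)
Your proposal is correct and follows essentially the same approach as the paper: establish that $\varphi$ is $G$-equivariant, obtain the induced surjective $*$-homomorphism $\tilde\varphi=\varphi\rtimes\id$ on the ambient full crossed products, and restrict to the closures of $C_c(G,\iota_i(\A))$. The only cosmetic difference is that the paper packages the construction of $\tilde\varphi$ via the exactness of full crossed products for $\alpha$-invariant ideals (citing \cite[Proposition~3.19]{Williams}), whereas you invoke functoriality/the universal property directly; your write-up also fills in the surjectivity and the ``moreover'' clause in more detail than the paper does.
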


\begin{proof}
This follows from similar reasoning as \cite[Theorem 2.4]{KatRamHN}.
By definition, the morphism $\varphi$ is a $*$-homomorphism $\varphi : \C_2 \rightarrow \C_1$ such that $\varphi\iota_2 = \iota_1$. As well, by the uniqueness of the extension of the group homomorphism to the C$^*$-covers we have $\varphi\alpha_g = \alpha_g\varphi$ (abusing notation by calling all the group homomorphisms $\alpha$). In other words, $\ker \varphi$ is an $\alpha$-invariant ideal (cf. \cite[Chapter 2]{Hamidi}). 

Now \cite[Proposition 3.19]{Williams} states that full crossed products of C$^*$-algebras preserve exact sequences by $\alpha$-invariant ideals. Hence,
\[
\varphi \rtimes \id : \C_2 \rtimes_\alpha G \rightarrow \C_1 \rtimes_\alpha G
\]
is a surjective $*$-homomorphism. In particular, $q_\varphi := \varphi\rtimes\id|_{\A \rtimes_{(\C_2,\iota_2), \alpha} G}$
is the required completely contractive surjective homomorphism.
\end{proof}

From the previous proposition we see that there is a complete lattice of relative full crossed products indexed by the complete sublattice of $\cstarlattice(\A)$ of $\alpha$-admissible C$^*$-covers. Of course, inequivalent C$^*$-covers may lead to completely isometrically isomorphic relative full crossed products, perhaps even collapsing to a single unique algebra as in the case when $G$ is amenable.

By \cite[Proposition 3.7-8]{KatRamMem} every non-degenerate completely contractive representation of $\A \rtimes_\alpha G$ arises precisely as the integrated form
\[
\rho \rtimes u : \A \rtimes_\alpha G \rightarrow B(H)
\]
of a covariant representation $(\rho, u, H)$ of $(A, G,\alpha)$. Thus, the full crossed product is characterized as the universal operator algebra over all such covariant representations \cite[Theorem 3.10]{KatRamMem}. 

In \cite[Theorem 5.8]{KatRamMem} the second and third authors proved that given a dynamical system $(\A, G,\alpha)$ where $\A$ is unital and semi-Dirichlet then the relative full and reduced crossed products, relative to the C*-envelope, are semi-Dirichlet as well. This result was what was needed to find the non-tensor algebra semi-Dirichlet examples mentioned in Section \ref{sec:semi-dirichlet}. It turns out that this result is true using any semi-Dirichlet admissible C*-cover and without the unital assumption.

First, we need a small discussion about unitization. Let $(\A, G, \alpha)$ be a dynamical system and let $(\C, j)$ be a C*-cover of $\A$ which is $\alpha$-admissible. If $\A$ is non-unital, then $\C$ is non-unital as well as a C*-algebra. Let $\C_1$ denote the unitization of $\C$. We have a natural inclusion $\C \rtimes_{\alpha} G \subseteq \C_1 \rtimes_{\alpha} G$ and under that inclusion, $\A \rtimes_{\C, j, \alpha} G$ is identified with the (closed) subalgebra of $ \C_1 \rtimes_{\alpha} G$ generated by $C_c(G, \A)$. By abusing notation, we call this copy of $\A \rtimes_{(\C, j), \alpha} G$ inside $ \C_1 \rtimes_{\alpha} G$ as $\A \rtimes_{(\C_1, j), \alpha} G$. Also the subalgebra of $ \C_1 \rtimes_{\alpha} G$ generated by $C_c(G, \bbC I )$ will be denoted as $\bbC I \rtimes_{\alpha} G$.

\begin{theorem}\label{thm:crossedissD}
Let $(\A, G, \alpha)$ be a dynamical system and let $(\C, j)$ be a C*-cover of $\A$ which is $\alpha$-admissible. If $(\C,j)$ is a semi-Dirichlet C$^*$-cover of $\A$, then 
\begin{itemize}
\item[\textup{(i)}] $\A \rtimes_{(\C, j), \alpha} G \subseteq \C \rtimes_{\alpha} G$ is semi-Dirichlet, and
\item[\textup{(ii)}]$\A\rtimes_{\alpha}^r G \subseteq \C \rtimes_{\alpha}^r G $ is semi-Dirichlet.
\end{itemize}
\end{theorem}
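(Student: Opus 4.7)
The plan is to verify that the natural inclusion
\[
\B \ := \ \A \rtimes_{(\C, j), \alpha} G \ \hookrightarrow \ \C \rtimes_\alpha G
\]
is a completely isometric semi-Dirichlet representation. The inclusion is completely isometric by construction, so by Proposition~\ref{prop:DKDirichlet}(ii) it then follows that $\B$ is a semi-Dirichlet operator algebra. Because $C_c(G, j(\A))$ is norm-dense in $\B$ and both multiplication and involution are norm-continuous on $\C \rtimes_\alpha G$, it suffices to show $f^* * g \in \overline{\B + \B^*}$ for all $f, g \in C_c(G, j(\A))$.

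For such $f, g$, the crossed product gives the pointwise formula
\[
(f^* * g)(t) \ = \ \int_G \Delta(s^{-1}) \, \alpha_s\!\left( f(s^{-1})^* \, g(s^{-1} t) \right) ds.
\]
By the hypothesis that $(\C, j)$ is semi-Dirichlet, $f(s^{-1})^* g(s^{-1}t)$ lies in $j(\A)^* j(\A) \subseteq \overline{j(\A) + j(\A)^*} =: B$; and $B$ is $\alpha$-invariant since $(\C, j)$ is $\alpha$-admissible, so the integrand takes values in the closed subspace $B \subseteq \C$ for every $s$. Hence $t \mapsto (f^* * g)(t)$ is a continuous, compactly supported function from $G$ into $B$, i.e., $f^* * g \in C_c(G, B)$.

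The next step is to approximate $f^* * g$ in the crossed-product norm by elements of $C_c(G, j(\A)) + C_c(G, j(\A))^*$. A direct check with the involution formula on $C_c(G, \C)$ shows $C_c(G, j(\A))^* = C_c(G, j(\A)^*)$, so the target is $C_c(G, j(\A) + j(\A)^*)$. A standard tensor-density argument gives that $C_c(G) \otimes B$ is dense in $C_c(G, B)$ in the inductive limit topology, so $f^* * g$ is uniformly approximated on a fixed compact support by finite sums $\sum_i \phi_i \cdot b_i$ with $\phi_i \in C_c(G)$ and $b_i \in B$; norm-approximating each $b_i$ by some $x_i + y_i^* \in j(\A) + j(\A)^*$ then places the resulting sums in $C_c(G, j(\A)) + C_c(G, j(\A))^*$. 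Uniform convergence on a common compact support implies $L^1$-norm convergence, which dominates the full crossed-product norm, so the approximants converge to $f^* * g$ in $\C \rtimes_\alpha G$. This establishes~(i).

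For part~(ii), the same argument passes verbatim with $\C \rtimes_\alpha G$ replaced by $\C \rtimes_\alpha^r G$, since the formulas on $C_c(G, \C)$ are identical and the $L^1$-bound continues to dominate the reduced norm. The step that requires the most care is the last approximation: splitting $\phi \in C_c(G,B)$ first as $\sum_i \phi_i b_i$ with $b_i \in B$, and then \emph{independently} replacing each $b_i$ by an element of $j(\A) + j(\A)^*$, is what allows us to avoid constructing a (generally nonexistent) continuous section of the surjection $j(\A) \oplus j(\A) \to j(\A) + j(\A)^*$ that would otherwise be needed to realize a pointwise splitting of $f^* * g$ into a sum of functions valued in $j(\A)$ and $j(\A)^*$.
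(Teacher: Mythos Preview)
Your proof is correct and takes a genuinely different route from the paper's.

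The paper first cites the unital case from \cite[Theorem~5.8]{KatRamMem} and then spends the bulk of the argument reducing the non-unital case to it: one unitizes to $\A_1 \subseteq \C_1$, applies the unital result, and then uses a carefully chosen representation $(\pi_1, v, H_1)$ with a reducing summand $N$ on which $\bbC I \rtimes_\alpha G$ is faithfully represented while $\C \rtimes_\alpha G$ vanishes, in order to strip off the scalar part and conclude that $\A \rtimes G$ itself is semi-Dirichlet. Your argument bypasses all of this: you work directly with the convolution and involution formulas on $C_c(G,\C)$, observe that $f^* * g$ takes values in the $\alpha$-invariant closed subspace $B = \overline{j(\A) + j(\A)^*}$, and then use the inductive-limit density of $C_c(G) \otimes B$ in $C_c(G,B)$ together with the $L^1$-bound on both crossed-product norms to finish. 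This is uniform in the unital/non-unital distinction, self-contained, and avoids any representation theory; the paper's approach, by contrast, ties the result explicitly to the earlier literature and gives a concrete spatial picture, at the cost of a more circuitous argument in the non-unital case.

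One small expository point: your sentence ``so the target is $C_c(G, j(\A) + j(\A)^*)$'' slightly overstates what you need, since $C_c(G, j(\A)) + C_c(G, j(\A)^*)$ is \emph{a priori} smaller than $C_c(G, j(\A) + j(\A)^*)$ (this is exactly the splitting issue you flag at the end). But your two-step approximation --- first to elementary tensors in $C_c(G) \otimes B$, then replacing each coefficient $b_i$ by some $x_i + y_i^*$ --- lands in the smaller space anyway, so the proof is unaffected.
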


\begin{proof} If $(\A, G, \alpha)$ is a unital dynamical system then the same exact arguments as in the proof of \cite[Theorem 5.8]{KatRamMem} suffice to prove both (i) and (ii). So here we treat the case where $\A$ is non-unital.

(i) Let $(\pi, u, H)$ be a covariant representation of $(\C, G, \alpha)$ so that the representation $\pi \rtimes u$ is a faithful representation of $\C \rtimes_{\alpha} G$.

Let $H_1:=H \oplus H$ and consider
\[
\C_1:= \left\{ \left( \begin{smallmatrix} c+\lambda I & 0\\0&\lambda I \end{smallmatrix} \right) \mid c \in \C, \lambda \in \bbC\right\} \subseteq B(H_1)
\]
and 
\[
\A_1:= \left\{ \left( \begin{smallmatrix} a+\lambda I & 0\\0&\lambda I \end{smallmatrix} \right) \mid a \in \A, \lambda \in \bbC\right\} \subseteq \C_1.
\]
Note that $\A_1\subseteq C_1$ is semi-Dirichlet and so by the unital case 
\[
\A_1 \rtimes_{(\C_1, j), \alpha} G \subseteq \C_1 \rtimes_{\alpha} G
\]
is semi-Dirichlet. Let 
\[
\pi_1 : \C_1 \longrightarrow B(H_1) 
\]
be the inclusion map and consider the unitary representation 
\[
G \ni s \longmapsto v(s):=u(s)\oplus u(s) \in B(H_1).
\]
The pair $(\pi_1, v, H_1)$ forms a covariant representation of $(\C_1, G, \alpha)$. Let $\rho:=\pi_1\rtimes v$ be the integrated form representation of $\C \rtimes_{\alpha} G$.

Let $M, N\subseteq H_1$, with $M:=H\oplus 0$ and $N:=M^{\perp}$. For an elementary function $\lambda\otimes z \in C_c(G,\bbC I)$, with $\lambda \in \bbC$ and $z \in C_c(G)$, we have 
\begin{equation*}
\begin{split}
\rho(\lambda \otimes z)&= \lambda \int_{G} z(r)v(r)d\mu(r) \\
&= \lambda \int_{G} z(r)(u(r) \oplus u(r))d\mu(r) \\
&=   \Big( \lambda \int_{G} z(r)u(r) d\mu(r)  \Big)\oplus  \Big( \lambda \int_{G} z(r)u(r) d\mu(r)  \Big)
\\ & = (\rho(\lambda \otimes z) |_{N}) \oplus (\rho(\lambda \otimes z) |_{N}) .
\end{split}
\end{equation*}
Therefore the restriction of elements from $\rho(\bbC I \rtimes_{\alpha} G )$ onto $N$ is norm preserving.

On the other hand, $N$ is reducing for $\pi_1(\C_1)$ and in addition $\pi_1(\C) |_{N}=\{0\}$. This situation persists for elements of $\rho( \C\rtimes_{\alpha}G)$, when we view $ \C\rtimes_{\alpha}G$ as a subset of $ \C_1\rtimes_{\alpha}G$. Indeed, if $P$ is the orthogonal projection to $N$, then for any elementary function $c \otimes z \in C_c(\C,  G)$ with $c \in \C$ and $z \in C_c(G)$, we have
\begin{equation*}
\begin{split}
\rho(c \otimes z)P&=  \int_{G} \pi_1(\alpha^{-1}_r(c) )z(r)v(r) Pd\mu(r) \\
&=  \int_{G}\pi_1(\alpha^{-1}_r(c) ) P z(r)v(r) d\mu(r) \\
&=0 = P\rho(c \otimes z).
\end{split}
\end{equation*}
In addition, similar arguments show that the restriction of $\rho$ on  $\C\rtimes_{\alpha}G$ is a $*$-isomorphism.

For the the proof, note that 
\[
\A_1\rtimes_{(\C_1, j), \alpha}G = \overline{\spn}\big( \A \rtimes_{(\C, j), \alpha}G + \bbC I\rtimes_{\alpha} G\big)
\]
and so 
\begin{equation} \label{eq:Sub}
\S\big(\A_1\rtimes_{(\C_1, j), \alpha}G  \big) \subseteq \overline{\spn}\big( \S\big(\A \rtimes_{(\C, j), \alpha}G  \big)+ \bbC I\rtimes_{\alpha} G\big),
\end{equation}
where 
\[
\S(\B) = \overline{\B + \B^*}
\]
is a shorthand for the operator system generated by $\B$.
Let $x \in (\A \rtimes_{(\C, j), \alpha} G)^*(\A \rtimes_{(\C, j), \alpha} G)$. Since $\A_1 \rtimes_{(\C_1, j), \alpha} G$ is semi-Dirichlet, we obtain from (\ref{eq:Sub}) sequences $\{y_n\}_{n=1}^{\infty}$ and  $\{w_n\}_{n=1}^{\infty}$ in $ \S\big(\A \rtimes_{(\C, j), \alpha}G\big)$ and $\bbC I\rtimes_{\alpha} G$ respectively, so that 
\[
x=\lim_n (y_n +w_n).
\]
By compressing to $N$, which annihilates $\rho( \C\rtimes_{\alpha}G)$, the above equation implies that \\ $\lim_n \rho( w_n) |_{N}=0$. As we have seen however, the compression on $N$ is norm preserving for elements of $\rho\big(\bbC I\rtimes_{\alpha}G\big)$ and so $\lim_n \rho(w_n)=0$. Therefore $\rho(x)=\lim_n \rho(y_n)  \in \rho\big( \S\big(\A \rtimes_{(\C, j), \alpha}G\big)\big)\subseteq  \S\big(\rho(\A \rtimes_{(\C, j), \alpha}G)\big)$ and so $\rho(\A \rtimes_{(\C, j), \alpha}G)$ is semi-Dirichlet. However, $\rho$ is a $*$-isomorphism on $\C\rtimes_{\alpha} G \subseteq \C_1\rtimes_{\alpha} G$ and so $ \A \rtimes_{(\C, j), \alpha}G$ is semi-Dirichlet.

(ii) Assume that $\A$ acts on a Hilbert space $H_0$ so that $\A$ is semi-Dirichlet as a subalgebra of $B(H_0)$ and $\ca(\A) \subseteq B(H_0)$ is an admissible C*-cover of $\A$ with respect to the $\alpha$-action of $G$. Let $H:= H_0\oplus \bbC$ and consider 
\[
\A_1:= \left\{ \left( \begin{smallmatrix} a+\lambda I & 0\\0&\lambda \end{smallmatrix} \right) \mid a \in \A, \lambda \in \bbC\right\}
\]
be the unitization of $\A$. Since $\A$ is semi-Dirichlet as a subalgebra of $B(H_0)$, it is easy to see that $\A_1$ is semi-Dirichlet as a subalgebra of $B(H)$. Furthermore, $\ca(\A_1)$ is an admissible C*-cover for the unitized $\alpha$-action of $G$. Therefore, $\A_1\rtimes_{\alpha}^r G$ is semi-Dirichlet. Lets look at the reduced crossed product $\A_1\rtimes_{\alpha}^r G$ more carefully.

Let
\[
\pi : \A_1 \longrightarrow B(H) 
\]
be the inclusion map and define
\begin{equation*}
\begin{split}
\tilde{\pi}(c)h(r) &:=\pi(\alpha_r^{-1}(c))h(r)\\
u(s) h(r) &:= h(s^{-1}r)
\end{split}
\end{equation*}
for $c \in C^\ast(\A_1)$, $s,r \in G$ and $h \in H\otimes L^2(G)$. Then, $(\tilde{\pi}, u, H)$ forms a covariant representation of $(\ca(\A_1), G,\alpha)$ and by definition  the integrated form representation $\rho:= \tilde{\pi} \rtimes u$ maps $\ca(\A_1) \rtimes_{\alpha} G$ onto $\ca(\A_1) \rtimes_{\alpha}^r G$. Consider the decomposition 
\begin{equation} \label{eq;split}
\tilde{H}:=H\otimes  L^2(G) = (H_0\otimes  L^2(G)) \oplus (\bbC \otimes  L^2(G)) 
\end{equation}
Now $u(r) = I \otimes \ell(r)$, $r \in G$, where $\ell$ is the left regular representation of $G$, and the restriction of $u$ to $N=\bbC \otimes L^2(G)$ is unitarily equivalent to $\ell$. Because $\ell$ is faithful on the reduced C*-algebra $C^\ast_r(G)\cong\bbC I\rtimes_{\alpha}^r G$, the restriction of elements from $\bbC I \rtimes_{\alpha}^rG \subseteq \A_1 \rtimes_{\alpha}^rG$ to $N$ is norm preserving.

On the other hand, $N$ is also reducing for $\tilde{\pi}(\A_1)$ and in addition $\tilde{\pi}(\A) |_{N}=\{0\}$. This equality persists for elements of $\A \rtimes_{\alpha}^rG \subseteq \A_1 \rtimes_{\alpha}^rG$. Indeed, if $P$ is the orthogonal projection on $N$, then for any elementary function $a \otimes z \in C_c(\A, G)$, with $a \in \A$ and $z \in C_c(G)$, we have
\begin{equation*}
\begin{split}
\rho(a \otimes z)P&=  \int_{G} \alpha^{-1}_r(a) z(r)u(r) Pd\mu(r) \\
&=  \int_{G} \alpha^{-1}_r(a) P z(r)u(r) d\mu(r) =0
\end{split}
\end{equation*}

To start with the proof, note that 
\[
\A_1\rtimes_{\alpha}^rG = \overline{\spn}\big( \A \rtimes_{\alpha}^rG + \bbC I\rtimes_{\alpha}^rG\big)
\]
and so 
\begin{equation} \label{eq;S}
\S\big(\A_1\rtimes_{\alpha}^rG \big) = \overline{\spn}\big( \S\big(\A \rtimes_{\alpha}^rG \big)+ \bbC I\rtimes_{\alpha}^rG\big).
\end{equation}
Let $x \in (\A \rtimes_{\alpha}^rG)^*(\A \rtimes_{\alpha}^rG)$. Since $\A_1\rtimes_{\alpha}^rG$ is semi-Dirichlet, we obtain from (\ref{eq;S}) sequences $\{y_n\}_{n=1}^{\infty}$ and  $\{w_n\}_{n=1}^{\infty}$ in $\S\big(\A \rtimes_{\alpha}^rG \big)$ and $\bbC I\rtimes_{\alpha}^rG$ respectively, so that 
\[
x=\lim_n (y_n +w_n).
\]
By compressing to $N$, which annihilates $\A\rtimes_{\alpha}^rG$, the above equation implies that \\ $\lim_n w_n |_{N}=0$. As we have seen however, the compression on $N$ is norm preserving for elements of $\bbC I\rtimes_{\alpha}^rG$ and so $\lim_n w_n=0$. Therefore $x=\lim_n y_n  \in \S\big(\A \rtimes_{\alpha}^rG \big)$ and so $\A \rtimes_{\alpha}^rG$ is semi-Dirichlet.
\end{proof}





In \cite[Theorem 5.5]{KatRamMem} it was proven that if $\A$ is Dirichlet then so is $\A \rtimes_\alpha G$. We are now in a position to prove this result in the semi-Dirichlet case.

\begin{theorem}\label{thm:sDcrossedproduct}
Suppose $(\A, G, \alpha)$ is a dynamical system. If $\A$ is semi-Dirichlet, then $\A \rtimes_\alpha G$ is completely isometrically isomorphic to $\A \rtimes_{(\sDmax(\A),\nu),\alpha} G$ via the canonical map. Moreover, $\A \rtimes_\alpha G$ is semi-Dirichlet.
\end{theorem}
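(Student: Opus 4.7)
The plan is to prove that the canonical completely contractive surjection
\[
q : \A\rtimes_\alpha G \longrightarrow \A\rtimes_{(\sDmax(\A),\nu),\alpha}G,
\]
arising from the admissibility $(\sDmax(\A),\nu) \preceq (\cmax(\A),\mu)$ granted by Proposition \ref{prop:sDmaxAdmissible}, is in fact a complete isometry. Once this is established, the final claim that $\A\rtimes_\alpha G$ is semi-Dirichlet is immediate by transporting the semi-Dirichlet property of $\A\rtimes_{(\sDmax(\A),\nu),\alpha}G$, which is furnished by Theorem \ref{thm:crossedissD}(i) applied to the semi-Dirichlet admissible cover $(\sDmax(\A),\nu)$.

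The key input is Theorem \ref{thm:dilatestosD}. Starting from an arbitrary covariant representation $(\rho, u, H)$ of $(\A,G,\alpha)$, I would coextend it to a covariant representation $(\sigma, v, K)$ with $\sigma$ semi-Dirichlet. The next step is to lift $\sigma$ to a $*$-representation $\hat\sigma : \sDmax(\A) \to B(K)$ satisfying $\hat\sigma\nu = \sigma$. If $\sigma$ happens to be completely isometric, $(C^*(\sigma(\A)), \sigma)$ is itself a semi-Dirichlet C$^*$-cover and the lift is provided directly by Theorem \ref{thm:sDLattice}. In general, pick a faithful $*$-representation $\pi$ of $\sDmax(\A)$ and form the block diagonal representation $\sigma'' := \sigma \oplus (\pi\nu)$, which is simultaneously semi-Dirichlet and completely isometric; by Theorem \ref{thm:sDLattice} its cover is majorized by $(\sDmax(\A),\nu)$, and composing the resulting morphism of covers with the first-coordinate projection, which is a $*$-homomorphism on $C^*(\sigma''(\A))$ because that C$^*$-algebra consists of block-diagonal operators, yields the required $\hat\sigma$.

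Next, verify that $(\hat\sigma, v)$ is covariant for the C$^*$-dynamical system $(\sDmax(\A), G, \alpha)$: the set of $c \in \sDmax(\A)$ for which $v(g)\hat\sigma(c) = \hat\sigma(\alpha_g(c))v(g)$ holds for every $g\in G$ is a closed $*$-subalgebra that contains $\nu(\A)$ by the covariance of $(\sigma, v)$ and the identity $\hat\sigma\nu = \sigma$, and therefore equals all of $\sDmax(\A)$. The integrated form then satisfies $(\hat\sigma\rtimes v)(\nu\circ f) = (\sigma\rtimes v)(f)$ for every $f \in C_c(G,\A)$, while the coextension block form of $(\sigma, v)$ gives $P_H(\sigma\rtimes v)(f)P_H = (\rho\rtimes u)(f)$. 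Chaining these yields
\[
\|(\rho\rtimes u)(f)\| \ \leq \ \|(\hat\sigma\rtimes v)(\nu\circ f)\| \ \leq \ \|f\|_{\A\rtimes_{(\sDmax(\A),\nu),\alpha}G}.
\]
Taking the supremum over all covariant representations delivers $\|f\|_{\A\rtimes_\alpha G} \leq \|f\|_{\A\rtimes_{(\sDmax(\A),\nu),\alpha}G}$, and repeating the argument with matrix amplifications of $(\rho, u)$ upgrades the conclusion to a complete isometry on $C_c(G,\A)$, which extends by density.

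The main obstacle I anticipate is arranging the lift $\hat\sigma$ when $\sigma$ is merely completely contractive rather than completely isometric, since Theorem \ref{thm:sDLattice} is phrased in terms of C$^*$-covers and therefore presupposes a completely isometric embedding. The direct sum device above is the natural workaround: it trades the loss of information in the first block for the gain of complete isometry in the second, and the fact that $C^*(\sigma''(\A))$ lies entirely inside the block-diagonal algebra $B(K)\oplus B(H_\pi)$ is precisely what makes the first-coordinate projection a genuine $*$-homomorphism rather than merely a completely positive compression.
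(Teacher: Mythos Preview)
Your overall strategy matches the paper's, but you have introduced an unnecessary complication that opens a genuine gap. The paper begins with a single completely isometric non-degenerate representation $\pi = \rho\rtimes u$ of $\A\rtimes_\alpha G$; then $\rho$ is automatically completely isometric, and so is its coextension $\sigma$. Hence $(C^*(\sigma(\A)),\sigma)$ is already a semi-Dirichlet C$^*$-cover, and the lift $\tilde\sigma:\sDmax(\A)\to B(K)$ exists immediately by maximality, with no direct-sum device required. Your decision to take the supremum over \emph{all} covariant representations forces you to handle non-completely-isometric $\sigma$, which is where the trouble arises.

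The gap is your unjustified assertion that $\sigma''=\sigma\oplus(\pi\nu)$ is semi-Dirichlet. A direct sum of semi-Dirichlet representations need not be semi-Dirichlet: the paper's own $T_2$ example displays a family of semi-Dirichlet representations whose direct sum realizes $(\cmax(T_2),\mu)$, which is explicitly shown \emph{not} to be a semi-Dirichlet cover. The issue is that for $\sigma''(a)^*\sigma''(b)$ to lie in $\overline{\sigma''(\A)+\sigma''(\A)^*}$ one needs a single sequence $(c_n,d_n)$ working simultaneously in both summands, whereas the two separate semi-Dirichlet hypotheses only furnish separate sequences. Your workaround can be salvaged by noting that the $\sigma$ produced in Theorem~\ref{thm:dilatestosD} is actually Shilov (that is how it is constructed there), and direct sums of Shilov representations are again Shilov, exactly as in the proof of Theorem~\ref{thm:sDLattice}; then $\sigma''$ is Shilov and completely isometric, hence semi-Dirichlet by Proposition~\ref{prop:shilov}. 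But the paper's route, starting from one completely isometric $\rho$, avoids the detour altogether.
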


\begin{proof}
Let $\pi : \A \rtimes_\alpha G \rightarrow B(H)$ be a completely isometric non-degenerate representation.
By \cite[Proposition 3.8]{KatRamMem}, $\pi$ is the integrated form of a non-degenerate covariant representation $(\rho, u, H)$, $\pi = \rho\rtimes u$. Necessarily, $\rho$ is completely isometric, too.
Theorem \ref{thm:dilatestosD} gives that $(\rho, u ,H)$ coextends to a covariant representation $(\sigma, v, K)$ where $\sigma$ is semi-Dirichlet. Moreover, since $P_H\sigma|_H = \rho$ then $\sigma$ is also completely isometric. Thus, by the universal property of $\sDmax(\A)$ there exists a $*$-homomorphism $\tilde\sigma : \sDmax(\A) \rightarrow B(K)$ such that $\tilde\sigma(\nu(a)) = \sigma(a)$ for all $a\in \A$.

Now $\alpha$ extends uniquely to $\sDmax(\A)$ by Proposition \ref{prop:sDmaxAdmissible} and so
\begin{align*}
v(g)\tilde\sigma(\nu(a))v(g)^* & = v(g)\sigma(a)v(g)^* 
\\ & = \sigma(\alpha_g(a) )
\\ & = \tilde\sigma(\nu(\alpha_g(a)))
\\ & = \tilde\sigma(\alpha_g(\nu(a)))
\end{align*}
for all $a\in \A$.
Hence, $v(g)\tilde\sigma(c) = \tilde\sigma(\alpha_g(c))v(g)$ for all $c\in \sDmax(\A)$ by the uniqueness of the universal property of $\sDmax(\A)$. Thus, $(\tilde\sigma, v, K)$ is a covariant representation of $(\sDmax(\A), G, \alpha)$.

To finish off the argument remember that $H\subset K$ is coinvariant for $\tilde\sigma$ and $v$. Then compression to $H$ is a completely contractive homomorphism of $\A\rtimes_{(\sDmax(\A),\nu),\alpha} G$
onto $\A \rtimes_\alpha G$. Therefore, by the universality of the full crossed product these two operator algebras are completely isometrically isomorphic.

Lastly, by the previous theorem we have that $\A \rtimes_\alpha G$ is a semi-Dirichlet operator algebra.
\end{proof}

We end this section with a nice application of Takai duality.

\begin{theorem}\label{thm:AbeliansDcrossed}
Suppose $(\A, G, \alpha)$ is a unital dynamical system with $G$ a locally compact abelian group. Then $\A$ is semi-Dirichlet if and only if $\A \rtimes_\alpha G$ is semi-Dirichlet.
\end{theorem}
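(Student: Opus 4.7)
The forward implication is immediate from Theorem~\ref{thm:sDcrossedproduct} (which does not in fact require $G$ to be abelian), so all the work is in the converse. The plan is to exploit Takai duality: applying the forward direction a second time to the dual dynamical system will reduce the problem to recovering the semi-Dirichlet property of $\A$ from that of its stabilization $\A \otimes \K(L^2(G))$.

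Concretely, assume $\A \rtimes_\alpha G$ is semi-Dirichlet. Since $G$ is locally compact abelian, so is its Pontryagin dual $\widehat G$, and the dual action $\widehat\alpha$ of $\widehat G$ on $\A \rtimes_\alpha G$ makes $(\A \rtimes_\alpha G, \widehat G, \widehat\alpha)$ into another dynamical system satisfying the hypotheses of Theorem~\ref{thm:sDcrossedproduct}. That theorem then yields that
\[
(\A \rtimes_\alpha G) \rtimes_{\widehat\alpha} \widehat G
\]
is semi-Dirichlet. By the operator-algebraic Takai duality established in \cite{KatRamMem}, this iterated crossed product is completely isometrically isomorphic to $\A \otimes \K(L^2(G))$, so Lemma~\ref{lem:sDisoinvariant} gives that $\A \otimes \K(L^2(G))$ is semi-Dirichlet.

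It remains to recover the semi-Dirichlet property of $\A$ itself from that of $\A \otimes \K$. The plan here is a corner-compression argument. Take any completely isometric semi-Dirichlet representation $j \colon \A \otimes \K \to B(H)$ afforded by Proposition~\ref{prop:DKDirichlet}(ii), fix a rank-one projection $e \in \K$, and set $p := j(1_\A \otimes e)$. The map $j'(a) := j(a \otimes e)$ is then a completely isometric representation of $\A$ on $pH$. For $a, b \in \A$, the semi-Dirichlet hypothesis on $j$ supplies sequences $X_n, Y_n \in \A \otimes \K$ with
\[
j'(a)^* j'(b) = j(a^*b \otimes e) = \lim_n \bigl(j(X_n) + j(Y_n)^*\bigr).
\]
Compressing both sides by $p$ and using that $(1_\A \otimes e) Z (1_\A \otimes e) \in \A \otimes \bbC e$ for every $Z \in \A \otimes \K$, the compressed identity will take the form
\[
j'(a)^* j'(b) = \lim_n \bigl( j'(x_n) + j'(y_n)^* \bigr)
\]
for suitable $x_n, y_n \in \A$, showing that $j'$ is semi-Dirichlet. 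A final appeal to Proposition~\ref{prop:DKDirichlet}(ii) then yields that $\A$ is semi-Dirichlet.

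The one delicate point is to invoke the correct formulation of Takai duality from \cite{KatRamMem}, which in this non-selfadjoint setting delivers the desired complete isometric identification with $\A \otimes \K(L^2(G))$; the corner-compression step and the final application of Proposition~\ref{prop:DKDirichlet}(ii) are routine.
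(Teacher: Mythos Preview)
Your proof is correct and follows the same strategy as the paper's: apply Takai duality to pass to $\A \otimes \K(L^2(G))$, then recover the semi-Dirichlet property of $\A$ by a corner compression at a rank-one projection. One small slip: the intermediate expression $j(a^*b \otimes e)$ is ill-defined, since $a^*b$ need not lie in $\A$ and $j$ is only defined on $\A \otimes \K$; but this term is unnecessary --- the semi-Dirichlet hypothesis on $j$ already gives $j'(a)^*j'(b) \in \overline{j(\A\otimes\K) + j(\A\otimes\K)^*}$ directly, and the compression step goes through unchanged. Your packaging via an arbitrary completely isometric semi-Dirichlet representation (and Proposition~\ref{prop:DKDirichlet}(ii)) is in fact slightly cleaner than the paper's, which works inside the C$^*$-envelope and therefore needs the identification $C^*_e(\A \otimes \K) \simeq C^*_e(\A) \otimes \K$ from \cite{HumRam}.
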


\begin{proof}
The previous theorem gives the forward direction. Assume then, that $\A \rtimes_\alpha G$ is semi-Dirichlet. 
Thus, the full crossed product of the dual dynamical system $(\A\rtimes_\alpha G, \hat G, \hat \alpha)$ \cite[Chapter 4]{KatRamMem} is semi-Dirichlet by Theorem \ref{thm:sDcrossedproduct}. The Takai duality of crossed products of operator algebras \cite[Theorem 4.4]{KatRamMem} then gives
\[
(\A \rtimes_\alpha G) \rtimes_{\hat\alpha} \hat G \simeq \A \otimes \K(L^2(G)),
\]
where the latter algebra is a subalgebra of $C^*_e(\A) \otimes \K(L^2(G))$. In particular, since $\K(L^2(G))$ is simple and nuclear then everything with the tensor product behaves nicely and from \cite[Proposition 4.4]{HumRam},
\[
C^*_e(\A) \otimes \K(L^2(G)) \simeq C^*_e(\A\otimes \K(L^2(G))).
\]
Thus, the semi-Dirichlet property gives
\[
\left(\A\otimes \K(L^2(G))\right)^*\left(\A\otimes \K(L^2(G))\right) \ \subseteq \ \overline{\A\otimes \K(L^2(G)) + \left(\A\otimes \K(L^2(G))\right)^*}
\]
in $C^*_e(\A) \otimes \K(L^2(G))$ which can be treated as a subalgebra of $C^*_e(\A)_1 \otimes \K(L^2(G))$ since the nuclearity of the second algebra makes everything work out nicely.
So for $a,b\in \A$ and a rank-one projection $p\in \K(L^2(G))$ there exist $c_n,d_n\in \A \otimes \K(L^2(G))$ such that
\[
(a\otimes p)^*(b\otimes p) = \lim_{n\rightarrow \infty} c_n + d_n^*.
\]
But then
\begin{align*}
a^*b \otimes p  \ & = \ (1\otimes p)(a\otimes p)^*(b\otimes p)(1\otimes p)
\\ & = \ (1\otimes p)\left(\lim_{n\rightarrow \infty} c_n + d_n^*\right)(1\otimes p)
\\ & = \ \lim_{n\rightarrow \infty} (1\otimes p)c_n(1\otimes p) + (1\otimes p)d_n^*(1\otimes p).
\end{align*}
This implies that 
\[
(\A \otimes p)^*(\A \otimes p) \subseteq \overline{(\A \otimes p) + (\A \otimes p)^*}.
\]
Therefore, $\A$ is semi-Dirichlet.
\end{proof}

\section*{Acknowledgements}
Adam Humeniuk was partially supported as a Postdoctoral Fellow at MacEwan University, by the third author, Nicolae Strungaru, and Adi Tcaciuc.
Elias Katsoulis was partially supported by the NSF grant DMS-2054781.
Christopher Ramsey was supported by the NSERC Discovery grant 2019-05430. The authors would like to thank the reviewer for their careful reading and improvements.

\end{document}